\documentclass[a4paper, 11pt]{amsart}
\usepackage[latin1]{inputenc}
\usepackage[T1]{fontenc}
\usepackage[english]{babel}
\usepackage{amssymb}
\usepackage{amsmath}
\usepackage{amsthm}
\usepackage{amscd}
\usepackage{amsfonts}
\usepackage{stmaryrd}
\usepackage{pb-diagram}
\usepackage{epic,eepic,epsfig}
\usepackage{a4wide}
\usepackage{nextpage}
\usepackage{fancyhdr}

\pagestyle{fancy}
\fancyhead[LE,CE,RE,LO,CO,RO]{}
\fancyhead[LE,RO]{\thepage}
\fancyhead[CE]{\tiny\scshape{F. Pazuki}}
\fancyhead[CO]{\tiny\scshape{Modular invariants and isogenies}}
\fancyfoot[LE,CE,RE,LO,CO,RO]{}

\newtheorem{prop}{Proposition}[section]
\newtheorem{corollary}[prop]{Corollary}

\newtheorem{lemma}[prop]{Lemma}

\newtheorem{theorem}[prop]{Theorem}

\theoremstyle{definition}
\newtheorem{remark}[prop]{Remark}
\newtheorem{definition}[prop]{Definition}
\newtheorem{notation}[prop]{Notation}

\begin{document}

\title{Modular invariants and isogenies}
\author{{F}abien {P}azuki}
\thanks{F. Pazuki, University of Copenhagen, fpazuki@math.ku.dk.}

\maketitle
\vspace{1cm}

\textsc{Abstract.}
We provide explicit bounds on the difference of heights of the $j$-invariants of isogenous elliptic curves defined over $\overline{\mathbb{Q}}$. The first one is reminiscent of a classical estimate for the Faltings height of isogenous abelian varieties, which is indeed used in the proof. We also use an explicit version of Silverman's inequality and isogeny estimates by Gaudron and R\'emond. We give applications in the study of V\'elu's formulas and of modular polynomials.

{\flushleft
\textbf{Keywords:} Heights, Elliptic curves, Isogenies.\\
\textbf{Mathematics Subject Classification:} 11G50, 11G05, 14G40, 14J15, 14K02. }

\begin{center}
---------
\end{center}

\begin{center}
\textbf{Invariants modulaires et isog\'enies}.
\end{center}

\textsc{R\'esum\'e.}
On donne dans ce texte des majorants pour la diff\'erence des hauteurs des $j$-invariants de courbes elliptiques isog\`enes d\'efinies sur $\overline{\mathbb{Q}}$. La premi\`ere in\'egalit\'e ressemble \`a un r\'esultat classique sur les hauteurs de Faltings de vari\'et\'es ab\'eliennes isog\`enes, qui s'av\`ere effectivement utile dans la preuve. On fera de plus usage d'une version explicite d'une in\'egalit\'e de Silverman et de travaux de Gaudron et R\'emond. On applique enfin ces r\'esultats aux formules de V\'elu et \`a l'\'etude des polyn\^omes modulaires.

{\flushleft
\textbf{Mots-Clefs:} Hauteurs, Courbes elliptiques, Isog\'enies.\\
}

\begin{center}
---------
\end{center}

\thispagestyle{empty}

\section{Introduction}

Two elliptic curves defined over a number field $K$ are isomorphic over $\overline{\mathbb{Q}}$ if and only if they have the same $j$-invariant. A natural question is: what happens to the $j$-invariant within an isogeny class? We obtain the following result in this direction.

\begin{theorem}\label{dim1}
 Let $\varphi:E_1\to E_2$ be a $\overline{\mathbb{Q}}$-isogeny between two elliptic curves defined over $\overline{\mathbb{Q}}$. Let $j_1$ and $j_2$ be the respective $j$-invariants. Then one has
\begin{equation}\label{6}
\vert h(j_{1})-h(j_{2})\vert \leq 9.204+12\log \deg\varphi,
\end{equation}
and
\begin{equation}\label{66}
 h(j_{1})-h(j_{2}) \leq 10.68+6\log \deg\varphi +6\log(1+h(j_1)),
\end{equation}
where $h(.)$ denotes the absolute logarithmic Weil height.
\end{theorem}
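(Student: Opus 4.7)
The strategy, suggested by the abstract, is to compare the height of the $j$-invariant with the stable Faltings height $h_F(E)$, making use of two classical tools in explicit form. First, the Faltings--Raynaud formula applied to elliptic curves yields
$$|h_F(E_1)-h_F(E_2)|\leq \tfrac{1}{2}\log\deg\varphi,$$
so the Faltings heights of isogenous elliptic curves are comparable up to an explicit term in $\log\deg\varphi$. Second, an explicit version of Silverman's inequality compares $h_F(E)$ with $\tfrac{1}{12}h(j_E)$, typically in the asymmetric form
$$-c_1\leq h_F(E)-\tfrac{1}{12}h(j_E)\leq c_2+d\log(1+h(j_E)),$$
with explicit numerical constants extractable from Silverman's original work and sharpened by Gaudron--R\'emond.

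For \eqref{66}, I would combine these two ingredients via the identity
$$h(j_1)-h(j_2)=12\bigl(h_F(E_1)-h_F(E_2)\bigr)-12\bigl(h_F(E_1)-\tfrac{1}{12}h(j_1)\bigr)+12\bigl(h_F(E_2)-\tfrac{1}{12}h(j_2)\bigr).$$
Applying the Silverman upper bound to the middle term contributes a $12d\log(1+h(j_1))$ contribution (expected to be $6\log(1+h(j_1))$ in the end, forcing $d=1/2$ in the explicit Silverman one uses); the Silverman lower bound applied to the last term contributes only a numerical constant; and the Faltings--Raynaud estimate on the first term contributes $12\cdot\tfrac{1}{2}\log\deg\varphi=6\log\deg\varphi$. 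Tracking these numerics carefully yields the stated one-sided bound with the constant $10.68$.

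For the symmetric bound \eqref{6}, the same manipulation would a priori still leave a $\log(1+h(j_i))$ term for some index $i$. To remove this logarithmic dependence on the heights themselves, I would use Silverman's comparison in a variant without a logarithmic error term (at the price of a larger absolute constant), or else apply the one-sided bound \eqref{66} both for $\varphi$ and for its dual isogeny $\widehat{\varphi}$ (of the same degree) and feed in the Gaudron--R\'emond isogeny estimates to absorb the surviving $\log(1+h(j_i))$ into an extra copy of $\log\deg\varphi$; this is what would convert the coefficient $6$ appearing in \eqref{66} into the coefficient $12$ appearing in \eqref{6}. The main obstacle is not the global shape of the inequalities, which is dictated by the two classical inputs, but the delicate bookkeeping of the numerical constants throughout the chain of estimates so as to land on the stated explicit values $9.204$ and $10.68$.
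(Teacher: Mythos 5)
Your derivation of \eqref{66} is essentially the paper's argument, reorganized: Raynaud's estimate $|h_F(E_1)-h_F(E_2)|\leq\frac{1}{2}\log\deg\varphi$ combined with the explicit two-sided Silverman comparison, which the paper records as $1.18\leq \frac{1}{12}h(j)-h_F(E)\leq 2.08+\frac{1}{2}\log(1+h(j))$. With these rounded constants your identity gives $12(2.08-1.18)+6\log\deg\varphi+6\log(1+h(j_1))=10.8+\cdots$, so you recover $10.68$ only by carrying the unrounded constants through; this part is sound.

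Your two proposed routes to the uniform bound \eqref{6}, however, both fail, and this is exactly where the paper's new idea lives. First, there is no ``variant of Silverman's comparison without a logarithmic error term'': the difference $\frac{1}{12}h(j)-h_F(E)$ genuinely contains the unbounded archimedean quantity $\frac{1}{2d}\sum_\sigma\log\mathrm{Im}\tau_\sigma$ (take $\tau=iy$ with $y\to\infty$; the difference grows like $\frac{1}{2}\log y$), and the introduction states explicitly that estimates of the form $|h(j)-12h_F(E)|\leq c_1+c_2\log\max\{1,h\}$ are too weak to yield \eqref{6}. Second, you cannot absorb $6\log(1+h(j_1))$ into an extra $6\log\deg\varphi$ via Gaudron--R\'emond: Theorem \ref{dim1} is asserted for an \emph{arbitrary} isogeny $\varphi$, whose degree may be $2$ while $h(j_1)$ is arbitrarily large, so no inequality of the shape $\log(1+h(j_1))\lesssim\log\deg\varphi$ is available; the Gaudron--R\'emond theorem bounds the degree of \emph{some} minimal isogeny from above and enters only in Corollary \ref{j isogenes}. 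What the paper actually does is introduce the modified height $h_\nu(E)=h_F(E)+\frac{1}{2d}\sum_\sigma\log\mathrm{Im}\tau_\sigma+\log 2\pi$, prove the purely numerical two-sided comparison $-0.583\leq\frac{1}{12}h(j)-h_\nu(E)\leq 0.184$ (Lemma \ref{hauteur j}), and separately control the variation of the correction term under isogeny by $\alpha(E_1,E_2)\leq\frac{1}{2}\log\deg\varphi$ using the injectivity-diameter comparison (Lemmas \ref{rho} and \ref{alphaC}). The coefficient $12$ in \eqref{6} thus arises as $12(\frac{1}{2}+\frac{1}{2})\log\deg\varphi$ --- one half from Raynaud on $h_F$, one half from the bound on $\alpha$ --- and not from any doubling via the dual isogeny, which serves only to symmetrize the inequality.
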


Inequality (\ref{6}) is more uniform, but inequality (\ref{66}) is better in several cases. For instance, to optimize inequalities (\ref{6}) and (\ref{66}), it is natural to consider an isogeny with minimal degree. In view of Th\'eor\`eme 1.4 of \cite{14}, recalled below in Theorem \ref{Gaudron Remond}, one obtains the following corollary.

\begin{corollary}\label{j isogenes}
Let $E_1$ and $E_2$ be semi-stable elliptic curves defined over a number field $K$ of degree $d$ and isogenous over $\overline{\mathbb{Q}}$, with respective $j$-invariants $j_1$ and $j_2$. Let $M=\max\{ h(j_1), h(j_2)\}$ and $m=\min\{h(j_{1}), h(j_{2})\}$. Then 
\begin{equation}
\vert h(j_{1})-h(j_{2}) \vert \leq 77.6+6\log(1+M)+12\log\Big(d\max\{m-14.16\,,\,11820\}+48d\log d \Big).
\end{equation}
Moreover, if $E_1$ (and hence $E_2$) has complex multiplications, then one has 
\begin{equation}
\vert h(j_{1})-h(j_{2}) \vert \leq 43.5+ 6\log(1+M)+12\log\Big(d\max\{m-14.16+ 6\log d, 12\} \Big).
\end{equation}
Finally, if $E_1$ and $E_2$ don't have complex multiplications and if $K$ has a real embedding, one has 
\begin{equation}
\vert h(j_{1})-h(j_{2}) \vert \leq 30+6\log(1+M)+ 12\log\Big( d\max\{m-14.16\,,\, 12\log d, 12\} \Big).
\end{equation}
\end{corollary}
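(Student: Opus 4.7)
The plan is to combine inequality (\ref{66}) of Theorem \ref{dim1} with the explicit isogeny estimates of Gaudron--R\'emond (Theorem \ref{Gaudron Remond}), using the explicit Silverman inequality recalled in the paper to translate Faltings heights into $j$-invariant heights. By symmetry and the fact that a dual isogeny has the same degree, I may assume without loss of generality that $h(j_1)\leq h(j_2)$, so that $m=h(j_1)$, $M=h(j_2)$, and $|h(j_1)-h(j_2)|=h(j_2)-h(j_1)$. Applying inequality (\ref{66}) with the roles of $E_1$ and $E_2$ interchanged, to an isogeny $\psi\colon E_2\to E_1$ of minimal degree, I obtain
\[
|h(j_1)-h(j_2)|\leq 10.68+6\log\deg\psi+6\log(1+M).
\]

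The second step is to bound $\log\deg\psi$ by Theorem \ref{Gaudron Remond}. In each of the three cases (semi-stable, complex multiplication, non-CM with a real embedding) that theorem provides an explicit upper bound of the shape
\[
\deg\psi\leq\kappa\bigl(d\max\{\cdots\}+\delta\,d\log d\bigr)^{2},
\]
with constants $\kappa$ and $\delta$ and content of the maximum depending on the case, and in which the relevant Faltings height may be taken to be that of the curve with the smaller $j$-invariant. Using the explicit Silverman inequality I then trade this Faltings height for $h(j_1)=m$; this is what produces the additive constant $-14.16$ inside the three maxima. Taking logarithms and using the square in the Gaudron--R\'emond bound turns $6\log\deg\psi$ into a $12\log(\cdots)$ term (explaining the coefficient $12$ in front of the outer logarithm), while the terms $6\log\kappa$ combine with $10.68$ to give the three advertised additive constants $77.6$, $43.5$ and $30$.

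The main obstacle is purely one of bookkeeping: one must track three different numerical constants simultaneously (the Silverman constant $14.16$, the three Gaudron--R\'emond multiplicative constants $\kappa$, and the $10.68$ of inequality (\ref{66})), ensure that Silverman's inequality is applied in the direction yielding a genuine upper bound on $h_F(E)$ from $h(j)$, and verify that applying Gaudron--R\'emond in the direction of the curve with smaller $j$-invariant puts $m$ (rather than $M$) inside the outer logarithm. No input beyond Theorem \ref{dim1}, Theorem \ref{Gaudron Remond} and Silverman's explicit bound is required.
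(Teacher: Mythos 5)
Your proposal is correct and follows the paper's own route exactly: apply inequality (\ref{66}) to a minimal-degree isogeny from the curve of larger $j$-height, bound that degree via Theorem \ref{Gaudron Remond} (with $h_F$ replaced by the minimum of the two Faltings heights via the dual isogeny), and convert $h_F$ to $\frac{1}{12}(m-14.16)$ using the lower bound in (\ref{exsil}); the constants $77.6$, $43.5$, $30$ then arise as $10.68+6\log(10^7/144)$, $10.68+6\log(34000/144)$, $10.68+6\log(3583/144)$, as you indicate. No gaps.
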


\begin{remark}\label{elliptic isog}
There is no more degree of isogeny in the upper bounds of Corollary \ref{j isogenes}. Given two elliptic curves, it enables us to build a quick test to rule out the existence of a $\overline{\mathbb{Q}}$-isogeny between them. For instance let's take $j_{1}=2$ and $j_{2}=2^{474}$. These $j$-invariants are both integral, so the associated elliptic curves $E_1$ and $E_2$ have potentially good reduction everywhere, hence regarding basic reduction properties, nothing prevents these curves from being isogenous. Over the field $K=\mathbb{Q}(E_1[12])$, the curve $E_1$ is semi-stable (hence $E_2$ as well if the curves are isogenous), and $[K:\mathbb{Q}]\leq\# GL_2(\mathbb{Z}/12\mathbb{Z})=4608$. Now $\vert h(j_{1}) - h(j_{2})\vert= \log(2^{473})>77.6+6\log(1+\log2^{474})+12\log(4608\cdot11820+48\cdot4608\log4608)$, hence the curves are not isogenous over $\overline{\mathbb{Q}}$ by virtue of Corollary \ref{j isogenes}.
\end{remark}

\begin{remark}
Quantitative results on isogenous elliptic curves are first given in \cite{15, 16}, later improved in \cite{14}.
\end{remark}

\begin{remark}
Corollary \ref{j isogenes} implies finiteness of the set of $\overline{\mathbb{Q}}$-isomorphism classes of elliptic curves within an isogeny class. Indeed if $j_{1}$ is fixed and $K$ is fixed, then $j_{2}$ has bounded height, and the Northcott property of the Weil height concludes the argument. 
\end{remark}

We also obtain an explicit upper bound on the coefficients of modular polynomials. For any positive integer $m$, we denote by $\Phi_m$ the modular polynomial associated to cyclic isogenies of degree $m$. Let $\psi(m)=m\prod_{p\vert m}(1+p^{-1})$. We denote by $h_\infty(P)$ the logarithm of the maximum of the complex absolute values of the coefficients of a polynomial $P$. In Corollary \ref{modul}, we prove that for all $m\geq 1$
\begin{equation}
h_\infty(\Phi_m)\leq \psi(m)\Big(6\log m+\log\psi(m)+6\log(12\log m+2\log\psi(m)+25.2)+15.7  \Big),
\end{equation}
so the main term in the upper bound is slightly worse\footnote{We obtain $(7+\varepsilon)\psi(m)\log m$, the asymptotic is $(6-\varepsilon)\psi(m)\log m$.} than the known asymptotic, recalled in (\ref{Paula}), when $m$ grows to infinity. Estimates on modular polynomials are used for instance when computing explicitly Hilbert class polynomials, see for instance \cite{9, 5}.
\\

In order to prove Theorem \ref{dim1}, we compare the absolute logarithmic Weil height of the $j$-invariant with the Faltings height, then apply a classical estimate (Faltings \cite{12}, Raynaud \cite{17}) on the Faltings height in an isogeny class, in the particular case of elliptic curves here. Inequality (\ref{66}) comes from Silverman's work \cite{7} pages 254--258 (see (\ref{exsil}) below for explicit constants) and the isogeny estimate on the Faltings height. Previous comparisons between the $j$-invariant and the Faltings height were not sufficient, though, to get inequality (\ref{6}). In particular, estimates of the form $\vert h(j)-12h_F(E)\vert\leq c_1+c_2\log\max\{1,h\}$, with $c_1$ and $c_2$ positive constants and $h=h_F(E)$ or $h=h(j)$ are too weak. So the new input here is a modification of the Faltings height given in Notation \ref{modifiedFaltHeight} that encapsulates just enough of the complex elliptic curve data to make Lemma \ref{hauteur j} work. Once this lemma is established, a combination of the Faltings height estimate in an isogeny class and of Proposition \ref{alpha} leads to Theorem \ref{dim1}. The work of Gaudron and R\'emond \cite{14} is used multiple times. After introducing the material and proving Proposition \ref{alpha} in Section \ref{Definitions}, we prove Theorem \ref{dim1} and Corollary \ref{j isogenes} in Section \ref{j inv}. We apply Theorem \ref{dim1} to study the height of modular polynomials in Section \ref{modu}, and we add a remark on V\'elu's formulas in Section \ref{section velu}.

\section{Definitions and preliminaries}\label{Definitions}

\subsection{Basic notation}
\label{notbase}

If $K$ is a number field, we denote by $d$ its degree over $\mathbb{Q}$ and by $M_{K}$ the set of all places of $K$. For any natural prime number $p$, we normalize the archimedean absolute values by $\vert p\vert_v=p$ and the non-archimedean by $\vert p\vert_v=p^{-1}$ if $v$ divides $p$ or $\vert p\vert_v=1$ otherwise. For any $v\in{M_K}$, we have $K_{v}$ the completion of $K$ with respect to the valuation $|.|_{v}$.  We denote by $d_v$ the local degree $[K_v:\mathbb{Q}_v]$. The symbol $N_{K/\mathbb{Q}}$ denotes the norm on $K$ down to $\mathbb{Q}$. The notation $\log$ stands for the logarithm satisfying $\log(e)=1$. If $G$ is a finite set, the symbol $\#G$ stands for the number of elements in $G$.

If $\alpha$ is an algebraic number, element of a number field $K$, we will use the absolute logarithmic Weil height 
\begin{equation}
h(\alpha)=\frac{1}{d}\sum_{v\in{M_K}}d_v\log\max\{1,\vert\alpha\vert_v\}.
\end{equation}

\subsection{Discriminant and $j$-invariant}\label{discriminant}

Let $E$ be an elliptic curve defined over a number field $K$. Choose a Weierstrass equation $y^2=x^3+Ax+B$ where $A, B\in{K}$. Define $D=-16(4A^3+27B^2)$, which is non-zero because $E$ is a smooth curve. Define $j=-1728\frac{(4A)^3}{D}$. This element of $K$ is called the $j$-invariant of $E$. Two elliptic curves defined over $K$ are isomorphic over $\overline{\mathbb{Q}}$ if and only if they have the same $j$-invariant.

Choose an embedding of $K$ in $\mathbb{C}$. Let $\mathbb{H}$ be the complex upper half plane. We associate to $E$ a complex number in the fundamental domain $\mathcal{F}=\{\tau\in{\mathbb{H}\,\vert\, \vert \tau\vert\geq 1 \; \mathrm{and}\; \vert \mathrm{Re}(\tau)\vert\leq \frac{1}{2}}\}$. It satisfies in particular $\mathrm{Im} \tau\geq \sqrt{3}/2$. As explained in Proposition 1.5 page 10 of \cite{18}, the choice is not unique. Among the choices we have we agree on taking, for instance, the $\tau$ with smallest real part. We will call this unique $\tau$ the \emph{reduced} $\tau$. 

The $j$-invariant of $E$ then becomes a modular function $\mathbb{H}\to \mathbb{C}$. We will also use the modular discriminant $\Delta$. Let us introduce the notation $q=e^{2\pi i\tau}$, then $j$ and $\Delta$ become naturally functions of $q$ by looking at their Fourier expansion at infinity. Several choices of normalization appear in the literature. We favor $\Delta(q)=q\prod_{n=1}^{+\infty}(1-q^n)^{24}$, without the multiplicative factor $(2\pi)^{12}$, and we took care of keeping all constants coherent with this choice. The beginning of the $q$-expansion of the $j$-invariant is $\frac{1}{q}+744+196884q+\ldots$.

\subsection{The Faltings height}

 Let $E$ be a semi-stable elliptic curve defined over a number field $K$. Let $S=\mathrm{Spec}({\mathcal O}_K)$, where
${\mathcal O}_K$ is the ring of integers of $K$ and let $\pi\colon {\mathcal E}\longrightarrow S $
be the N\'eron model of $E$ over $S$. Denote by $\varepsilon\colon S\longrightarrow {\mathcal E}$ the zero section of $\pi$ and by $\omega_{{\mathcal E}/S}$ the sheaf of relative differentials
\begin{equation}
\omega_{{\mathcal E}/S}:=\varepsilon^{\star}\Omega_{{\mathcal
E}/S}\simeq\pi_{\star}\Omega_{{\mathcal E}/S}\;.
\end{equation}

For any archimedean place $v$ of $K$, denote by $\sigma$ an embedding of $K$ in $\mathbb{C}$ associated to $v$, then the corresponding line bundle
\begin{equation}
\omega_{{\mathcal E}/S,\sigma}=\omega_{{\mathcal E}/S}\otimes_{{\mathcal O}_K,\sigma}\mathbb{C}\simeq H^0({\mathcal
E}_{\sigma}(\mathbb{C}),\Omega_{{\mathcal E}_\sigma}(\mathbb{C}))\;
\end{equation}
can be equipped with a natural $L^2$-metric $\Vert.\Vert_{\sigma}$ defined by
\begin{equation}
\Vert\alpha\Vert_{\sigma}^2=\frac{i}{2}\int_{{\mathcal
E}_{\sigma}(\mathbb{C})}\alpha\wedge\overline{\alpha}\;.
\end{equation}

The ${\mathcal O}_K$-module of rank one $\omega_{{\mathcal E}/S}$, together with the hermitian norms
$\Vert.\Vert_{\sigma}$ at infinity defines an hermitian line bundle 
$\overline{\omega}_{{\mathcal E}/S}=({\omega}_{{\mathcal E}/S}, (\Vert .\Vert_\sigma)_{\sigma:K\hookrightarrow \mathbb{C}})$ over $S$, which has a well defined Arakelov degree
$\widehat{\mathrm{deg}}(\overline{\omega}_{{\mathcal E}/S})$. Recall that for any hermitian line
bundle $\overline{\mathcal L}$ over $S$, the Arakelov degree of $\overline{\mathcal L}$
is defined as
\begin{equation}
\widehat{\mathrm{deg}}(\overline{\mathcal L})=\log\#\left({\mathcal L}/{s{\mathcal
O}}_K\right)-\sum_{\sigma:K\hookrightarrow \mathbb{C}}\log\Vert
s\Vert_{\sigma}\;,
\end{equation}
where $s$ is any non zero section of $\mathcal L$, and the result does not depend on the choice of $s$ in view of the product formula in $K$. 

We now give the definition of the classical Faltings height of \cite{12}.

\begin{definition}\label{faltings}  The Faltings height of $E$ is defined as
\begin{equation}
h_F(E):=\frac{1}{[K:\mathbb{Q}]}\widehat{\mathrm{deg}}(\overline{\omega}_{{\mathcal
E}/S}).
\end{equation}
\end{definition}

A key property is given by Raynaud in Corollaire 2.1.4 point (1) page 207 of \cite{17} (it can also be deduced from Lemma 5 page 358 of \cite{12}), any two elliptic curves $E_1$ and $E_2$ with an isogeny $\varphi:E_1\to E_2$ satisfy
\begin{equation}\label{Raynaud}
\vert h_F(E_1)- h_F(E_2)\vert\leq\frac{1}{2}\log\deg\varphi.
\end{equation}

\subsection{Injectivity diameter}

We recall the definition of the injectivity diameter as it will be useful in Lemma \ref{rho}.
Let $A$ be a complex abelian variety and $L$ be a polarization on $A$. It induces a hermitian norm $\Vert.\Vert_L$ on the tangent space $t_A$ by setting $\Vert z\Vert_L=\sqrt{H(z,z)}$ for $z\in{t_A}$, where $H(.,.)$ is the Riemann form associated to $L$. Let $\Omega_A$ be the period lattice of $A$. Then we define 
\begin{equation}
\rho(A,L)=\min\{\Vert\omega\Vert_L \;\vert\; \omega\in{\Omega_A},\, \omega\neq0\}.
\end{equation}
The number $\rho(A,L)$ acquires the name \emph{injectivity diameter} as it is the diameter of the largest ball on which the exponential $\exp: t_A\to A$ is injective.

In dimension 1, if one normalizes $\Omega_A=\mathbb{Z}+\tau\mathbb{Z}$ then $H(z,w)=\frac{z\overline{w}}{\mathrm{Im}\tau}$, see Example 4.1.3 page 71 of \cite{3}.

\subsection{Upper half plane and isogenies}

Let $E$ be an elliptic curve over $\mathbb{C}$. In the sequel, we always use the reduced $\tau$ introduced earlier and such that $E(\mathbb{C})\simeq\mathbb{C}/(\mathbb{Z}+\tau\mathbb{Z})$.

\begin{notation}
Let $E_1$ and $E_2$ be elliptic curves defined over a number field $K$. Let $\sigma: K\hookrightarrow\mathbb{C}$ be a complex embedding and $ E_{1}(\mathbb{C})\simeq\mathbb{C}/(\mathbb{Z}+\tau_{1,\sigma}\mathbb{Z})$, respectively $E_{2}(\mathbb{C})\simeq\mathbb{C}/(\mathbb{Z}+\tau_{2,\sigma}\mathbb{Z})$, where we ask that $\tau_{1,\sigma}$ and $\tau_{2,\sigma}$ are reduced.
We will use 
\begin{equation}
\alpha(E_1,E_2)=\frac{1}{2[K:\mathbb{Q}]}\sum_{\sigma:K\hookrightarrow \mathbb{C}}\log\frac{ \mathrm{Im}\tau_{1,\sigma}}{\mathrm{Im}\tau_{2,\sigma}}.
\end{equation}
\end{notation}

Our goal in this paragraph is to bound from above $\alpha(E_1,E_2)$ in the case where $E_1$ and $E_2$ are isogenous elliptic curves. We will use the following general lemma.

\begin{lemma}\label{rho}
Let $\varphi:A_1\to A_2$ be an isogeny and $L$ a polarization on $A_2$. Then 
\begin{equation}
\rho(A_2,L)\leq \rho(A_1,\varphi^{*}L)\leq (\deg\varphi)\rho(A_2,L),
\end{equation}
where $\rho(A,M)$ denotes the injectivity diameter of the polarized abelian variety $(A,M)$.
\end{lemma}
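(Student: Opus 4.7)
The plan is to translate the statement into a comparison between two lattices in a common $\mathbb{C}$-vector space via the derivative $d\varphi$, then read off both inequalities from the definition of $\rho$.

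First I would identify the setup. Since $\varphi$ is an isogeny, $d\varphi: t_{A_1}\to t_{A_2}$ is a $\mathbb{C}$-linear isomorphism that sends $\Omega_{A_1}$ into $\Omega_{A_2}$, and the quotient $\Omega_{A_2}/d\varphi(\Omega_{A_1})$ is isomorphic to $\ker\varphi$, hence of order $n:=\deg\varphi$. The pullback polarization $\varphi^{*}L$ has by definition Riemann form $H_{\varphi^{*}L}(z,w)=H_{L}(d\varphi(z),d\varphi(w))$, so $d\varphi$ is an isometry from $(t_{A_1},\Vert\cdot\Vert_{\varphi^{*}L})$ onto $(t_{A_2},\Vert\cdot\Vert_L)$. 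In particular, for every $\omega\in\Omega_{A_1}$ one has $\Vert\omega\Vert_{\varphi^{*}L}=\Vert d\varphi(\omega)\Vert_L$.

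For the left-hand inequality, take a nonzero $\omega_1\in\Omega_{A_1}$ realizing $\rho(A_1,\varphi^{*}L)$. Then $d\varphi(\omega_1)$ is a nonzero element of $\Omega_{A_2}$ (injectivity of $d\varphi$), and
\begin{equation*}
\rho(A_1,\varphi^{*}L)=\Vert\omega_1\Vert_{\varphi^{*}L}=\Vert d\varphi(\omega_1)\Vert_{L}\geq \rho(A_2,L).
\end{equation*}

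For the right-hand inequality, take a nonzero $\omega_2\in\Omega_{A_2}$ realizing $\rho(A_2,L)$. Since $\Omega_{A_2}/d\varphi(\Omega_{A_1})$ is a finite abelian group of order $n$, we have $n\omega_2\in d\varphi(\Omega_{A_1})$, say $n\omega_2=d\varphi(\omega_1')$ with $\omega_1'\in\Omega_{A_1}\setminus\{0\}$. Then
\begin{equation*}
\rho(A_1,\varphi^{*}L)\leq\Vert\omega_1'\Vert_{\varphi^{*}L}=\Vert d\varphi(\omega_1')\Vert_L=n\Vert\omega_2\Vert_L=n\,\rho(A_2,L).
\end{equation*}

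The argument is essentially linear algebra and a simple piece of group theory (order-killing in $\ker\varphi$); I do not anticipate a real obstacle. The only point that requires a bit of care is making sure the pullback polarization is defined as $H_{\varphi^{*}L}=H_L\circ(d\varphi\times d\varphi)$, so that $d\varphi$ is exactly an isometry, which is the standard convention used here via the Riemann form recalled in the preceding subsection.
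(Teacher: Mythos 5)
Your proof is correct. Note that the paper itself does not prove this lemma: it simply cites Lemma 3.4 of Gaudron--R\'emond \cite{14}, so you have supplied a self-contained argument where the paper defers to a reference. Your argument is the standard one (and essentially the one in the cited source): identify $d\varphi$ as an isometry from $(t_{A_1},\Vert\cdot\Vert_{\varphi^{*}L})$ onto $(t_{A_2},\Vert\cdot\Vert_{L})$ carrying $\Omega_{A_1}$ onto a sublattice of $\Omega_{A_2}$ of index $\deg\varphi$; the lower bound is then immediate, and the upper bound follows because the cokernel $\Omega_{A_2}/d\varphi(\Omega_{A_1})$ is killed by its order $n=\deg\varphi$, so $n\omega_2\in d\varphi(\Omega_{A_1})$ for a shortest period $\omega_2$ of $A_2$. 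All the small points that need checking are in order: the minimum defining $\rho$ is attained since the lattice is discrete and the Riemann form is positive definite, $d\varphi(\omega_1)\neq 0$ by injectivity of $d\varphi$, and $\omega_1'\neq 0$ because $n\omega_2\neq 0$.
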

\begin{proof}
This is Lemma 3.4 page 356 of \cite{14}.
\end{proof}

\begin{lemma}\label{alphaC}
Let $E_1$ and $E_2$ be two elliptic curves over $\mathbb{C}$. Let us denote $E_{1}(\mathbb{C})\simeq\mathbb{C}/(\mathbb{Z}+\tau_{1}\mathbb{Z})$ and $E_{2}(\mathbb{C})\simeq\mathbb{C}/(\mathbb{Z}+\tau_{2}\mathbb{Z})$, where we ask that $\tau_{1}$ and $\tau_{2}$ are reduced. Suppose there is an isogeny $\varphi: E_1\to E_2$. Then
\begin{equation}
-\log\deg\varphi\leq\log\frac{\mathrm{Im}\tau_1}{\mathrm{Im}\tau_2}\leq \log\deg\varphi.
\end{equation}
\end{lemma}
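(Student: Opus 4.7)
The strategy is to realize the isogeny analytically as multiplication by some $\lambda\in\mathbb{C}^{*}$ on the universal covers and then apply Lemma \ref{rho} together with a comparison of fundamental domain areas. Write $E_{1}(\mathbb{C})\simeq\mathbb{C}/\Omega_{1}$ and $E_{2}(\mathbb{C})\simeq\mathbb{C}/\Omega_{2}$ with $\Omega_{i}=\mathbb{Z}+\tau_{i}\mathbb{Z}$. Any analytic map $\varphi$ lifts to multiplication by a non-zero complex number $\lambda$ satisfying $\lambda\Omega_{1}\subseteq\Omega_{2}$, and $\deg\varphi=[\Omega_{2}:\lambda\Omega_{1}]$.

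First I would compute the degree as a ratio of covolumes. Since the lattice $\lambda\Omega_{1}$ has covolume $|\lambda|^{2}\mathrm{Im}\tau_{1}$ while $\Omega_{2}$ has covolume $\mathrm{Im}\tau_{2}$, we obtain the identity
\begin{equation*}
\deg\varphi=\frac{|\lambda|^{2}\mathrm{Im}\tau_{1}}{\mathrm{Im}\tau_{2}},
\qquad\text{equivalently}\qquad
\frac{\mathrm{Im}\tau_{1}}{\mathrm{Im}\tau_{2}}=\frac{\deg\varphi}{|\lambda|^{2}}.
\end{equation*}
Hence the task reduces to bounding $|\lambda|$ above and below by $1$ and $\deg\varphi$ respectively.

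For the bounds on $|\lambda|$ I would apply Lemma \ref{rho} to the principal polarization $L$ on $E_{2}$. With the normalization recalled in Section \ref{Definitions}, the Riemann form of $L$ pulls back to the Riemann form $H(z,w)=|\lambda|^{2}z\overline{w}/\mathrm{Im}\tau_{2}$ on $E_{1}$, so that for any $\omega\in\Omega_{1}$ one has $\|\omega\|_{\varphi^{*}L}=|\lambda|\,|\omega|/\sqrt{\mathrm{Im}\tau_{2}}$. Because $\tau_{1}$ is reduced (i.e. lies in $\mathcal{F}$), the minimum of $|m+n\tau_{1}|$ over $(m,n)\in\mathbb{Z}^{2}\setminus\{(0,0)\}$ equals $1$, hence $\rho(E_{1},\varphi^{*}L)=|\lambda|/\sqrt{\mathrm{Im}\tau_{2}}$; similarly $\rho(E_{2},L)=1/\sqrt{\mathrm{Im}\tau_{2}}$. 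Lemma \ref{rho} then yields
\begin{equation*}
\frac{1}{\sqrt{\mathrm{Im}\tau_{2}}}\;\leq\;\frac{|\lambda|}{\sqrt{\mathrm{Im}\tau_{2}}}\;\leq\;\frac{\deg\varphi}{\sqrt{\mathrm{Im}\tau_{2}}},
\end{equation*}
that is, $1\leq|\lambda|\leq\deg\varphi$, hence $1\leq|\lambda|^{2}\leq(\deg\varphi)^{2}$. Inserting this into the covolume identity and taking logarithms gives the two-sided bound claimed.

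The only non-routine step is the identification of the shortest period, where one needs the hypothesis that $\tau_{1}$ (and $\tau_{2}$) lies in the standard fundamental domain; outside of $\mathcal{F}$ the minimum period length of $\mathbb{Z}+\tau\mathbb{Z}$ need not be $1$, and the clean symmetric bound above would be lost. Everything else is bookkeeping on the complex uniformization and a direct application of Lemma \ref{rho}.
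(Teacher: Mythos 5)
Your proof is correct and is essentially the paper's argument: both hinge on applying Lemma \ref{rho} to the pullback of the principal polarization of $E_2$ and on the fact that a reduced lattice $\mathbb{Z}+\tau\mathbb{Z}$ has shortest period of Euclidean length $1$, so that $\rho(E,L)^{-2}=\mathrm{Im}\tau$. Your covolume identity $\deg\varphi=|\lambda|^{2}\mathrm{Im}\tau_{1}/\mathrm{Im}\tau_{2}$ is just the explicit analytic form of the paper's relation $\varphi^{*}L_{2}=L_{1}^{\otimes\deg\varphi}$, i.e. $\rho(E_{1},\varphi^{*}L_{2})^{2}=(\deg\varphi)\,\rho(E_{1},L_{1})^{2}$.
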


\begin{proof}
Any polarization on $E_1$ is a power of the principal polarization $L_1$ on $E_1$. Let us take the principal polarization $L_2$ on $E_2$, so one has $\varphi^{*}L_2=L_1^{\otimes \deg\varphi}$. Thus $\rho(E_1,\varphi^{*}L_2)^2=(\deg\varphi)\rho(E_1,L_1)^2$. Apply Lemma \ref{rho} (in dimension 1) to get
\begin{equation}\label{ineq1}
\rho(E_2,L_2)\leq (\deg\varphi)^{\frac{1}{2}}\rho(E_1,L_1)\leq (\deg\varphi)\rho(E_2,L_2).
\end{equation}
Now remark that $\rho(E_1,L_1)^{-2}=\mathrm{Im}\tau_1$ and $\rho(E_2,L_2)^{-2}=\mathrm{Im}\tau_2$ (or just refer to Remarque 3.3 page 356 of \cite{14}), hence (\ref{ineq1}) becomes 
\begin{equation}\label{ineq2}
\mathrm{Im}\tau_2\geq (\deg\varphi)^{-1}\mathrm{Im}\tau_1\geq (\deg\varphi)^{-2}\mathrm{Im}\tau_2,
\end{equation}
which gives 
\begin{equation}
(\deg\varphi)^{-1}\leq\frac{\mathrm{Im}\tau_1}{\mathrm{Im}\tau_2}\leq\deg\varphi,
\end{equation}
this concludes the proof.
\end{proof}

We give now an analytic lemma about the $j$-invariant, which improves on Lemme 1 page 187 of \cite{11} and on (3) page 2.6 of \cite{10}. 

\begin{lemma}\label{chemin faisant}
Let $\tau$ be an element of the upper half plane. Then 
\begin{equation}
\vert j(\tau)\vert\geq e^{2\pi\mathrm{Im}\tau}-970.8.
\end{equation}
\end{lemma}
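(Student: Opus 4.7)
The plan is to work directly with the Fourier $q$-expansion of $j$. Setting $q = e^{2\pi i \tau}$, one has $j(\tau) = q^{-1} + 744 + \sum_{n\geq 1} c_n q^n$ with $c_1 = 196884$, $c_2 = 21493760$, $c_3 = 864299970$, and, crucially, $c_n \geq 0$ for every $n \geq 1$. This expansion converges on the whole punctured unit disk, i.e.\ for every $\tau \in \mathbb{H}$. The reverse triangle inequality then gives
\[
|j(\tau)| \;\geq\; |q|^{-1} - 744 - \sum_{n \geq 1} c_n |q|^n \;=\; e^{2\pi \mathrm{Im}\tau} - 744 - T(|q|),
\]
where $T(x) = \sum_{n \geq 1} c_n x^n$. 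Thus the claim reduces to showing $T(|q|) \leq 226.8 = 970.8 - 744$ in the regime where the target bound is non-trivial.

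I would then split on the size of $\mathrm{Im}\tau$. If $e^{2\pi \mathrm{Im}\tau} \leq 970.8$, the right-hand side of the desired inequality is non-positive and the conclusion is immediate from $|j(\tau)| \geq 0$. If instead $e^{2\pi \mathrm{Im}\tau} > 970.8$, then $|q| < 1/970.8 \approx 1.03 \cdot 10^{-3}$, and $T(|q|)$ has to be estimated explicitly. Evaluating the first terms at $|q| = 1/970.8$ gives approximately $202.8 + 22.80 + 0.94 + 0.023 + \ldots$, already sitting under $226.8$ with a small but positive margin.

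To make this rigorous and close the argument, I would control the tail $\sum_{n \geq N} c_n |q|^n$ by invoking a classical effective upper bound on the Fourier coefficients of $j$, for instance the Rademacher--Lehmer estimate $c_n \leq e^{4\pi\sqrt{n}}$, which lets one dominate the remaining series by a fast-decaying comparison as soon as $|q| \leq 10^{-3}$. The main obstacle is the numerical calibration: the constant $970.8$ is essentially tight for this strategy, in the sense that at the critical value $|q| = 1/970.8$ the explicit partial sum together with a rigorous tail estimate must fit under $226.8$. This is exactly where the improvement over Lemme 1 of \cite{11} and (3) of \cite{10} comes from, and it forces one to carry enough initial $c_n$ explicitly to beat those older constants while keeping the remainder cleanly dominated.
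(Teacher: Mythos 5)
Your strategy is correct and would reach the stated constant, but it is a genuinely different route from the paper's. The paper never uses any individual Fourier coefficient beyond positivity: writing $j(\tau)=q^{-1}+\sum_{n\ge0}c_nq^n$ with $c_n\ge0$, it bounds the non-leading part by $|f(x,y)|\le j(iy)-e^{2\pi y}=:g(y)$, observes that $g$ is positive and decreasing in $y$, and introduces the threshold $y_0$ defined by $j(iy_0)=2e^{2\pi y_0}$, which yields $|j(\tau)|\ge e^{2\pi\mathrm{Im}\tau}-e^{2\pi y_0}$ for \emph{all} $\tau$ at once. All the numerical work is then concentrated in locating $y_0$, which the paper does via Ramanujan's hypergeometric inversion $j=27(1+8\alpha)^3/(\alpha(1-\alpha)^3)$ together with the expression of $q$ in terms of a ratio of ${}_2F_1$ values, giving $e^{2\pi y_0}\le970.8$. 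Your approach solves the same implicit equation --- namely $744+\sum_{n\ge1}c_nX^{-n}=X$ at $X=e^{2\pi y_0}$ --- by summing the $q$-series directly at the critical radius $|q|=1/970.8$. What the paper's method buys is that no effective bound on the $c_n$ is ever required; what yours buys is elementarity (no special-function identities), at the price of two points you must make rigorous. First, the tail: the Petersson--Rademacher asymptotic $c_n\sim e^{4\pi\sqrt n}/(\sqrt2\,n^{3/4})$ is not in itself an effective inequality, so you should either cite an explicit version (Brisebarre--Philibert) or use the elementary positivity trick $c_ne^{-2\pi ny}\le j(iy)$ for every $y>0$, hence $c_n\le j(i/\sqrt n)\,e^{2\pi\sqrt n}=j(i\sqrt n)\,e^{2\pi\sqrt n}$, which comfortably dominates $\sum_{n\ge5}c_n|q|^n$ in the regime $|q|\le 1/970.8$. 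Second, the margin is thin: at $|q|=1/970.8$ the partial sum through $n=4$ is already about $226.59$ against the allowance $226.8$, so the evaluation must be performed with controlled rounding. With those two points settled, your proof closes and recovers the same (essentially optimal for this threshold) constant.
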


\begin{proof}
Let us start by writing the $q$-expansion of $j$ as $\displaystyle{j(\tau)=\frac{1}{q}+\sum_{n=0}^{+\infty}c_n q^n}$. All the coefficients $c_n$ are positive integers. Indeed, if one denotes $\displaystyle{\sigma_{3}(n)=\sum_{d\vert n}d^3}$, one has the classical formula (see for instance \cite{18} Proposition 7.4.b page 60) 
\begin{equation}\label{jdeltag}
j(\tau)=\frac{\displaystyle{\Big(1+240\sum_{n=1}^{+\infty}\sigma_{3}(n)q^n}\Big)^3}{\displaystyle{q\prod_{n=1}^{+\infty}(1-q^n)^{24}}},
\end{equation}
and each factor $\displaystyle{(1-q^n)^{-24}=(\sum_{m=0}^{+\infty}q^{mn})^{24}}$ has positive integral coefficients, hence $j$ as well.

Let us now write $\tau=x+iy$ with $x,y\in{\mathbb{R}}$ and $y>0$. Then $j(\tau)=e^{-2\pi ix} e^{2\pi y}+f(x,y)$ where $\displaystyle{f(x,y)=\sum_{n=0}^{+\infty} c_n e^{2\pi i nx} e^{-2\pi ny}}$. One has $\displaystyle{\vert f(x,y)\vert \leq \sum_{n=0}^{+\infty} c_n e^{-2\pi ny}=j(iy)-e^{2\pi y}=g(y)}$, where the function $g$ is positive and decreasing on $]0,+\infty[$, because all the coefficients $c_n$ are positive. 

Let $y_0$ be the unique positive real number such that $j(iy_0)=2e^{2\pi y_0}$. Then for all $y\geq y_0$, one has $g(y)\leq g(y_0)=e^{2\pi y_0}$, so we get $\vert j(\tau)\vert\geq e^{2\pi y}-\vert f(x,y)\vert\geq e^{2\pi y}-e^{2\pi y_0}$ for any $y\geq y_0$, and for $y<y_0$ the inequality $\vert j(\tau)\vert\geq e^{2\pi y}-e^{2\pi y_0}$ holds trivially. We now need to estimate $y_0$ from above.

We will use an inversion process \textit{\`a la} Ramanujan. By (2.1) page 430 and (2.8) page 431 of \cite{2} one has the following formulas, where $0<\alpha<1$ 
\begin{equation}
j=27\frac{(1+8\alpha)^3}{\alpha(1-\alpha)^3}\quad\quad \mathrm{and}\quad q=e^{2\pi i\tau}=\exp\Big(-\frac{2\pi}{\sqrt{3}} \frac{_2F_1(\frac{1}{3},\frac{2}{3},1,1-\alpha)}{_2F_1(\frac{1}{3},\frac{2}{3},1,\alpha)} \Big),
\end{equation}
where $_2F_1$ is a classical hypergeometric function. There exists a real $\alpha_0$ such that 
\begin{equation}
y_0=\frac{1}{\sqrt{3}} \frac{_2F_1(\frac{1}{3},\frac{2}{3},1,1-\alpha_0)}{_2F_1(\frac{1}{3},\frac{2}{3},1,\alpha_0)},
\end{equation}
and it must satisfy, by definition of $y_0$, 
\begin{equation}
2\exp\Big(2\pi \frac{1}{\sqrt{3}} \frac{_2F_1(\frac{1}{3},\frac{2}{3},1,1-\alpha_0)}{_2F_1(\frac{1}{3},\frac{2}{3},1,\alpha_0)}\Big)=27\frac{(1+8\alpha_0)^3}{\alpha_0(1-\alpha_0)^3}.
\end{equation}
A quick numerical interval search provides us with $\alpha_0$ being close to $\alpha_1=0.02739$, and as 
\begin{equation}
2\exp\Big(2\pi \frac{1}{\sqrt{3}} \frac{_2F_1(\frac{1}{3},\frac{2}{3},1,1-\alpha_1)}{_2F_1(\frac{1}{3},\frac{2}{3},1,\alpha_1)}\Big)>27\frac{(1+8\alpha_1)^3}{\alpha_1(1-\alpha_1)^3},
\end{equation}
we obtain $e^{2\pi y_0}\leq \exp\Big(2\pi \frac{1}{\sqrt{3}} \frac{_2F_1(\frac{1}{3},\frac{2}{3},1,1-\alpha_1)}{_2F_1(\frac{1}{3},\frac{2}{3},1,\alpha_1)}\Big)  \leq 970.8$.
\end{proof}

We are now ready to prove the following lemma, which will be useful in the proof of inequality (\ref{66}).

\begin{lemma}\label{logtau}
Let $E$ be an elliptic curve defined over a number field $K$. Let $\sigma:K\hookrightarrow \mathbb{C}$ be a complex embedding and let $\tau_\sigma$ be the reduced element of the upper half plane corresponding to $E$. Then 
\begin{equation}
\frac{1}{2}\log\frac{\sqrt{3}}{2}\leq \frac{1}{2[K:\mathbb{Q}]}\sum_{\sigma:K\hookrightarrow \mathbb{C}} \log\mathrm{Im}\tau_\sigma\leq \frac{1}{2}\log(1+h(j))+0.97-\frac{1}{2}\log 2\pi.
\end{equation}
\end{lemma}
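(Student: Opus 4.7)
The plan is to handle the two inequalities separately: the lower bound is essentially free from the fundamental-domain condition, while the upper bound will follow by combining the pointwise estimate in Lemma \ref{chemin faisant} with Jensen's inequality applied to the archimedean part of the height $h(j)$.

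For the left inequality, I would simply recall that each $\tau_\sigma$ lies in the reduced fundamental domain $\mathcal{F}$, so $\mathrm{Im}\tau_\sigma\geq \sqrt{3}/2$ and hence $\log \mathrm{Im}\tau_\sigma\geq \log(\sqrt{3}/2)$ for every embedding $\sigma$. Averaging over the $[K:\mathbb{Q}]$ embeddings and dividing by $2$ yields the lower bound.

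For the right inequality, the starting point is Lemma \ref{chemin faisant}, which at each embedding $\sigma$ gives $e^{2\pi\mathrm{Im}\tau_\sigma}\leq |\sigma(j)|+970.8$. Using the elementary inequality $|\sigma(j)|+970.8\leq 971.8\cdot\max\{1,|\sigma(j)|\}$ and taking logarithms twice, I would obtain the pointwise bound
\begin{equation*}
\log \mathrm{Im}\tau_\sigma \leq -\log(2\pi)+\log\bigl(\log 971.8+\log^{+}|\sigma(j)|\bigr),
\end{equation*}
where $\log^{+}x=\log\max\{1,x\}$. Averaging over $\sigma$ and invoking concavity of $\log$ (Jensen's inequality) moves the average inside and produces
\begin{equation*}
\frac{1}{[K:\mathbb{Q}]}\sum_{\sigma}\log \mathrm{Im}\tau_\sigma \leq -\log(2\pi)+\log\Bigl(\log 971.8+\tfrac{1}{[K:\mathbb{Q}]}\sum_\sigma \log^{+}|\sigma(j)|\Bigr).
\end{equation*}
The archimedean part of the Weil height satisfies $\frac{1}{[K:\mathbb{Q}]}\sum_\sigma \log^{+}|\sigma(j)|\leq h(j)$, since the non-archimedean contributions to $h(j)$ are non-negative; this replaces the inner sum by $h(j)$.

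To finish I would use $\log 971.8\geq 1$, which gives $\log 971.8+h(j)\leq \log 971.8\cdot(1+h(j))$ and hence $\log(\log 971.8+h(j))\leq \log(\log 971.8)+\log(1+h(j))$. Dividing by $2$ yields the claimed bound, with the explicit constant coming from the numerical check $\frac{1}{2}\log(\log 971.8)\leq 0.97$. There is no genuine obstacle: the analytic content already sits inside Lemma \ref{chemin faisant}, and all that remains is the careful packaging via Jensen's inequality and tracking the constants tightly enough to land at $0.97-\tfrac{1}{2}\log 2\pi$.
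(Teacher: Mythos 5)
Your proof is correct and follows essentially the same route as the paper: both arguments use the lower bound $\mathrm{Im}\tau_\sigma\geq\sqrt{3}/2$ for the reduced $\tau_\sigma$, the pointwise upper bound from Lemma \ref{chemin faisant}, Jensen's inequality (the paper's ``arithmetico-geometric mean'') to move the average inside the logarithm, and the bound of the archimedean part of the height by $h(j)$. The only difference is minor bookkeeping in when the constant is extracted, and your constant $\frac{1}{2}\log\log 971.8\approx 0.964\leq 0.97$ matches the paper's.
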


\begin{proof}
For any complex embedding $\sigma$, as $\tau_\sigma$ is reduced one has $\mathrm{Im}\tau_\sigma\geq \sqrt{3}/2$. Together with Lemma \ref{chemin faisant}, we have 
\begin{equation}
\frac{\sqrt{3}}{2}\leq \mathrm{Im}\tau_\sigma\leq\frac{1}{2\pi}\log(\vert j\vert_\sigma +970.8),
\end{equation}
hence for $d=[K:\mathbb{Q}]$, 
\begin{equation}
\log\frac{\sqrt{3}}{2}\leq \frac{1}{d}\sum_{\sigma:K\hookrightarrow \mathbb{C}}\log\mathrm{Im}\tau_\sigma\leq  \frac{1}{d}\sum_{\sigma:K\hookrightarrow \mathbb{C}}\log\Big(\frac{1}{2\pi}\log(\vert j\vert_\sigma +970.8)\Big),
\end{equation}
and a direct estimate gives $\log\Big(\frac{1}{2\pi}\log(\vert j\vert_\sigma +970.8)\Big)\leq \log\log\max\{\vert j\vert_\sigma, e\}+1.94-\log2\pi$, one concludes by arithmetico-geometric mean on the sum over all embeddings, as in (11) page 258 of \cite{7}.

\end{proof}

\begin{prop}\label{alpha}
Let $E_1$ and $E_2$ be two elliptic curves over a number field $K$, isogenous over $\overline{\mathbb{Q}}$. Then 
\begin{equation}
\alpha(E_1,E_2)\leq  \frac{1}{2}\log\deg\varphi.
\end{equation}
Moreover, one also has 
\begin{equation}
\alpha(E_1,E_2)\leq \frac{1}{2}\log(1+h(j_1))+0.97-\frac{1}{2}\log 2\pi -\frac{1}{2}\log\frac{\sqrt{3}}{2}.
\end{equation}
\end{prop}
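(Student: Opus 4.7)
The plan is to handle the two inequalities separately, both being essentially direct combinations of lemmas already established in this section.

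For the first inequality, I would fix a complex embedding $\sigma: K \hookrightarrow \mathbb{C}$ and extend it to an embedding $\tilde\sigma$ of a number field $L \supset K$ over which the isogeny $\varphi$ is defined. Base changing along $\tilde\sigma$ produces an isogeny $\varphi_\sigma: E_{1,\sigma} \to E_{2,\sigma}$ of complex elliptic curves whose degree is still $\deg\varphi$, and since the reduced $\tau_{i,\sigma}$ depend only on the complex torus $E_{i,\sigma}(\mathbb{C})$, Lemma \ref{alphaC} yields
\begin{equation*}
\log\frac{\mathrm{Im}\tau_{1,\sigma}}{\mathrm{Im}\tau_{2,\sigma}} \leq \log \deg\varphi
\end{equation*}
for each $\sigma$. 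Averaging these pointwise inequalities over the $[K:\mathbb{Q}]$ complex embeddings and dividing by $2$ gives $\alpha(E_1,E_2) \leq \frac{1}{2}\log\deg\varphi$.

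For the second inequality, I would rewrite the defining sum as a difference
\begin{equation*}
\alpha(E_1,E_2) = \frac{1}{2[K:\mathbb{Q}]}\sum_{\sigma}\log\mathrm{Im}\tau_{1,\sigma} \;-\; \frac{1}{2[K:\mathbb{Q}]}\sum_{\sigma}\log\mathrm{Im}\tau_{2,\sigma},
\end{equation*}
and apply Lemma \ref{logtau} to each term: the upper bound of Lemma \ref{logtau} applied to $j_1$ controls the first sum by $\frac{1}{2}\log(1+h(j_1))+0.97-\frac{1}{2}\log 2\pi$, while the lower bound $\frac{1}{2}\log(\sqrt{3}/2)$ applied to the second sum (which appears with a minus sign) contributes $-\frac{1}{2}\log(\sqrt{3}/2)$. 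Adding these yields the stated bound.

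Neither step presents a real obstacle now that Lemmas \ref{alphaC} and \ref{logtau} are in hand; the only mild subtlety is to notice that the second inequality does not actually use the isogeny hypothesis on $E_2$ at all, only on the fact that $E_2$ is some elliptic curve over $K$ (so its reduced $\mathrm{Im}\tau_{2,\sigma}$ is $\geq \sqrt{3}/2$), and that all quantities in $\alpha$ are computed embedding by embedding so the passage to the field of definition of the isogeny in the first part is harmless.
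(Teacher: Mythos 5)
Your argument is exactly the paper's proof, which simply invokes Lemma \ref{alphaC} embedding by embedding for the first bound and Lemma \ref{logtau} (upper bound for $E_1$, lower bound for $E_2$) for the second; you have merely written out the details the author calls ``straightforward.'' The proposal is correct.
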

\begin{proof}
Straightforward from Lemma \ref{alphaC} and Lemma \ref{logtau}.
\end{proof}

\section{Proof of Theorem \ref{dim1}}\label{j inv}

Let us introduce the following quantity.

\begin{notation}\label{modifiedFaltHeight} 
Let $E$ be an elliptic curve over a number field $K$. For each complex embedding $\sigma: K\hookrightarrow\mathbb{C}$, choose the reduced $\tau_\sigma$ such that $E_{\sigma}(\mathbb{C})\simeq\mathbb{C}/(\mathbb{Z}+\tau_{\sigma}\mathbb{Z})$. We will use the following number,
\begin{equation}
h_\nu(E)=h_F(E)+\frac{1}{2[K:\mathbb{Q}]}\sum_{\sigma:K\hookrightarrow \mathbb{C}}\log \mathrm{Im}\tau_{\sigma} + \log2\pi.
\end{equation}
\end{notation}

We will prove Theorem \ref{dim1} by using a comparison between the $j$-invariant and $h_{\nu}(E)$. Here is the key lemma. The first inequality is a modified version of Lemma 7.9 page 393 of \cite{14}.

\begin{lemma}\label{hauteur j} For any elliptic curve $E$ over $\overline{\mathbb{Q}}$ with j-invariant
$j$ one has 
\begin{equation}
-0.583\le{1\over12}h(j)-h_{\nu}(E)\leq 0.184.
\end{equation}
\end{lemma}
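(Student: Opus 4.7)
The idea is to turn the inequality into a purely archimedean statement about the modular forms $\Delta$ and $c_4$, and then estimate these on the fundamental domain.

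First, I would reduce to the case where $E$ is semistable over $K$. Replacing $K$ by a finite extension $K'$ changes neither $h(j)$ nor $h_F(E)$; the archimedean correction $\tfrac{1}{2[K:\mathbb{Q}]}\sum_\sigma\log\mathrm{Im}\,\tau_\sigma$ also behaves well under base change, since each complex embedding of $K$ lifts to $[K':K]$ complex embeddings of $K'$ with the same value of $\tau$.

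Second, I would combine two classical formulas. On the one hand, Silverman's formula (adapted to the normalization $\Delta(q)=q\prod(1-q^n)^{24}$) gives, for semistable $E$,
\begin{equation*}
12\,h_F(E) \;=\; \frac{1}{[K:\mathbb{Q}]}\log\bigl|N_{K/\mathbb{Q}}(\Delta_{\min})\bigr| \;-\; \frac{1}{[K:\mathbb{Q}]}\sum_\sigma\log\!\bigl((2\pi)^{12}|\Delta(\tau_\sigma)|(\mathrm{Im}\,\tau_\sigma)^6\bigr).
\end{equation*}
On the other hand, for semistable $E$ one has $v(j)=-v(\Delta_{\min})<0$ at places of multiplicative reduction and $v(j)\geq 0$, $v(\Delta_{\min})=0$ at places of good reduction; summing over finite places and applying the product formula to $\Delta_{\min}\in K^\times$ yields
\begin{equation*}
h(j) \;=\; \frac{1}{[K:\mathbb{Q}]}\log\bigl|N_{K/\mathbb{Q}}(\Delta_{\min})\bigr| \;+\; \frac{1}{[K:\mathbb{Q}]}\sum_\sigma\log\max\{1,|j(\tau_\sigma)|\}.
\end{equation*}
Subtracting the two identities, the arithmetic contributions cancel and the definition of $h_\nu(E)$ absorbs the remaining $12\log 2\pi$ and $6\log\mathrm{Im}\,\tau_\sigma$ terms, leaving
\begin{equation*}
\frac{1}{12}h(j)-h_\nu(E) \;=\; \frac{1}{12[K:\mathbb{Q}]}\sum_\sigma F(\tau_\sigma), \qquad F(\tau):=\log\max\{1,|j(\tau)|\}+\log|\Delta(\tau)|.
\end{equation*}
Using the identity $j\Delta=c_4^3$ on the upper half plane, this simplifies further to $F(\tau)=\max\bigl\{3\log|c_4(\tau)|,\,\log|\Delta(\tau)|\bigr\}$.

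Third, I reduce to the uniform estimate $-0.583\cdot12\leq F(\tau)\leq 0.184\cdot12$ for $\tau\in\mathcal{F}$. For the upper bound, I work with the $q$-expansions $c_4=1+240\sum\sigma_3(n)q^n$ and $\Delta=q\prod(1-q^n)^{24}$; since $|q|\leq e^{-\pi\sqrt{3}}$ on $\mathcal{F}$, termwise majorations (keeping explicit tail control rather than passing to crude geometric series) bound $|c_4(\tau)|$ from above and $|\Delta(\tau)|$ from above, yielding the constant $0.184$. For the lower bound, the region $\{|j|\geq 1\}$ is controlled via $3\log|c_4|\geq \log|\Delta|$ combined with lower bounds on $|c_4|$ from the $q$-expansion; the region $\{|j|<1\}$ is a small neighbourhood of $\rho=e^{2\pi i/3}$ where only $\log|\Delta(\tau)|$ matters, and Lemma \ref{chemin faisant} together with the lower bound $|\Delta(\tau)|\geq|q|\prod(1-|q|^n)^{24}$ closes the estimate.

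The main obstacle is the last step: both extremal constants are attained near the boundary of $\mathcal{F}$ (the upper bound essentially at $\tau=i$ through $c_4$, the lower bound near $\tau=\rho$ through $\Delta$), so one must keep sharp track of the contributions of the first few terms of the $q$-expansions rather than use convenient but lossy majorations. The Silverman-style formula for $h_F(E)$ used in the second step is standard but must be verified with the paper's specific normalization of $\Delta$ (no $(2\pi)^{12}$ factor), which produces the $\log 2\pi$ appearing in the definition of $h_\nu$.
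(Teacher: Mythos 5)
Your plan is correct and follows essentially the same route as the paper: subtract Silverman's formula for $h(j)$ from the (correctly normalized) Faltings height formula to get the key identity $\frac{1}{12}h(j)-h_\nu(E)=\frac{1}{12d}\sum_\sigma\log\max\{|\Delta(\tau_\sigma)|,|j(\tau_\sigma)\Delta(\tau_\sigma)|\}$, then estimate this archimedean quantity on the fundamental domain using the $q$-expansions of $c_4^3=j\Delta$ and $\Delta$ together with $\mathrm{Im}\,\tau\geq\sqrt{3}/2$ and the lower bound $|j(\tau)|\geq e^{2\pi\mathrm{Im}\tau}-970.8$. Your split into the regions $|j|\geq 1$ and $|j|<1$ is just a repackaging of the paper's analysis of the single function $F(y)=\max\{e^{-2\pi y},1-970.8\,e^{-2\pi y}\}$ (and is needed, since a termwise lower bound on $|c_4|$ alone fails near $\tau=\rho$).
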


\begin{proof} Throughout the proof, we will always consider reduced elements $\tau_\sigma$. We start by applying formula (10) of Silverman~\cite{7} page 257, if $E$ is defined and semi-stable over a number field $K$, denote the minimal discriminant of $E$ by $D_{E/K}$ and the degree of $K$ by $d=[K:\mathbb{Q}]$, then
\begin{equation}\label{Silverman}
h(j)={1\over d}\log|N_{K/\mathbb{Q}}D_{E/K}|+{1\over
d}\sum_{\sigma:K\hookrightarrow \mathbb{C}}
\log\max\{1,|j(\tau_\sigma)|\}.
\end{equation}

We will use Proposition 1.1 of
\cite{7} page 254, where one has to correct a power of $2\pi$ in the definition of $\Delta$ (see paragraph \ref{discriminant}) for the formula to hold, as already done in paragraph 3 page 426 of \cite{1} or in Proposition 8.2 page 195 of \cite{8} (his $\Delta$ is given in Definition 4.4 page 185). See also Theorem 7 page 419 of \cite{13} (his $\Delta$ is defined page 416). We obtain the following.

\begin{equation}\label{SilJong}
h_F(E)={1\over12d}
\log|N_{K/\mathbb{Q}}D_{E/K}|-{1\over12d}\sum_{\sigma:K\hookrightarrow \mathbb{C}}\log((2\pi)^{12}|\Delta(\tau_\sigma)|(\mathrm{Im}\tau_\sigma)^6),
\end{equation}
thus
\begin{equation}\label{hV}
h_{\nu}(E)={1\over12d}
\log|N_{K/\mathbb{Q}}D_{E/K}|-{1\over12d}\sum_{\sigma:K\hookrightarrow\mathbb{C}}\log|\Delta(\tau_\sigma)|.
\end{equation}

Thus by substracting $(\ref{Silverman})$ to $(\ref{hV})$ we get the key equality
\begin{equation}\label{diff}
h_{\nu}(E)-{1\over12}h(j)=-{1\over12d}
\sum_{\sigma:K\hookrightarrow\mathbb{C}}{d_v\log\max\{|\Delta(\tau_\sigma)|,\vert
j(\tau_\sigma) \Delta(\tau_\sigma)\vert\}}.
\end{equation}

We now prove the first inequality of the lemma. We have
\begin{equation}
-\log|\Delta(\tau_{\sigma})|\leq
2\pi\mathrm{Im}\tau_{\sigma}+24C_{\tau_{\sigma}}, 
\end{equation}
with
\begin{equation}
C_{\tau_{\sigma}}= -\sum_{n=1}^{+\infty}\log|1-q^{n}|\leq
-\sum_{n=1}^{+\infty}\log(1-e^{-2\pi\mathrm{Im}\tau_{\sigma} n})\leq -\sum_{n=1}^{+\infty}\log(1-e^{-\pi\sqrt{3} n}), 
\end{equation}
where we used the inequality $\mathrm{Im}\tau_\sigma\geq \sqrt{3}/2$. A direct estimate then gives $24C_{\tau_\sigma}\leq 1/9$. 
That provides us with 
\begin{equation}
|\Delta(\tau_\sigma)|\ge e^{-1/9-2\pi \mathrm{Im}\tau_\sigma}.
\end{equation}
One has
$|j(\tau_\sigma)|\ge e^{2\pi \mathrm{Im}\tau_\sigma}-970.8$ by Lemma \ref{chemin faisant}. Hence we get \begin{equation}
\max\{1,\vert j(\tau_\sigma)\vert\}
|\Delta(\tau_\sigma)|\ge\max\{1,e^{2\pi \mathrm{Im}\tau_\sigma}-970.8\}
 e^{-1/9-2\pi \mathrm{Im}\tau_\sigma},
\end{equation}
which can be written
\begin{equation}
\max\{1,\vert j(\tau_\sigma)\vert\}
|\Delta(\tau_\sigma)| \ge e^{-1/9}F(\mathrm{Im}\tau_\sigma)
\end{equation}
where $F$ is the function
given by 
\begin{equation}
F(y)=\max\{e^{-2\pi y}, 1-970.8 e^{-2\pi y}\}.
\end{equation}
The inequality $1-970.8 e^{-2\pi y}\geq e^{-2\pi y}$ is equivalent to $y\geq \log(971.8)/2\pi$. On the interval $[\sqrt{3}/2, \log(971.8)/2\pi]$, the function $F$ is decreasing and $F(y)\geq F(\log(971.8)/2\pi)$ and on $[\log(971.8)/2\pi, +\infty[$, the function $F$ is increasing and $F(y)\geq F(\log(971.8)/2\pi)$ as well, hence $\forall y\geq \frac{\sqrt{3}}{2},\, F(y)\geq F(\log(971.8)/2\pi)=1/971.8$. It allows us to conclude by injecting in (\ref{diff})
 \begin{equation}\label{first}
 h_{\nu}(E)-{1\over12}h(j)\leq -\frac{1}{12}\log\Big(\frac{e^{-1/9}}{971.8}\Big)\leq 0.583.
 \end{equation}

Let us prove the second inequality. We have (see for instance \cite{11} page 184) the following classical equality equivalent to (\ref{jdeltag}), for any $\tau_\sigma$ in the upper half plane, 
\begin{equation}
j(\tau_\sigma)\Delta(\tau_\sigma)=\Big(1+240\sum_{n=1}^{+\infty}n^3\frac{q^n}{1-q^n}\Big)^3.
\end{equation}
Using $\mathrm{Im}\tau_\sigma\geq\sqrt{3}/2$ for the reduced $\tau_{\sigma}$ and $\vert 1-q^n\vert\geq 1-\vert q\vert^n$ one has by direct estimate
\begin{equation}\label{jdelta1}
\vert j(\tau_\sigma) \Delta(\tau_\sigma)\vert\leq \Big(1+240\Big\vert\sum_{n=1}^{+\infty}n^3\frac{q^n}{1-q^n}\Big\vert\Big)^3\leq \Big(1+240\sum_{n=1}^{+\infty}n^3\frac{e^{-\pi \sqrt{3} n}}{1-e^{-\pi\sqrt{3} n}}\Big)^3\leq 9.02.
\end{equation}  We also have 
\begin{equation}
\log\vert\Delta(\tau_\sigma)\vert = \log\vert q\vert+24\sum_{n=1}^{+\infty}\log\vert1-q^n\vert,
\end{equation}
hence using again $\mathrm{Im}\tau_\sigma\geq\sqrt{3}/2$ we obtain 
\begin{equation}\label{jdelta2}
\log\vert\Delta(\tau_\sigma)\vert\leq -\pi\sqrt{3}+24\sum_{n=1}^{+\infty}\log(1+e^{-\pi\sqrt{3}n})\leq -5,
\end{equation} so we can bound from above the maximum
\begin{equation}
\log\Big(\max\{\vert\Delta(\tau_\sigma)\vert,\vert j(\tau_\sigma)\Delta(\tau_\sigma)\vert\}
\Big)\leq \log(9.02),
\end{equation}
by comparing $(\ref{jdelta1})$ and $(\ref{jdelta2})$.
Inject in (\ref{diff}) to get the second inequality of the lemma, which together with (\ref{first}) concludes the whole proof. 
\end{proof}

\textit{Final step in the proof of Theorem \ref{dim1}.}
Let $E_1$ and $E_2$ be two elliptic curves over $\overline{\mathbb{Q}}$. Suppose $\varphi:E_1\to E_2$ is an isogeny. Let $j_1$ and $j_{2}$ be the corresponding $j$-invariants. We start by inequality (\ref{6}).
One writes:
\\

\begin{tabular}{lll}
$\!\!\!\!\!\!\!\!\!\frac{1}{12}\left( h(j_1)-h(j_2)\right)$ & $=$ & $\frac{1}{12}h(j_1)-h_{\nu}(E_1)+ h_{\nu}(E_1)-h_{\nu}(E_2)+ h_{\nu}(E_2)-\frac{1}{12}h(j_2)$\\
\\
$$ & $\leq$ & $0.184+  h_{\nu}(E_1)-h_{\nu}(E_2)   +0.583 \hfill $\\
\\
$$ & $\leq$ & $0.767+  h_{F}(E_1)-h_{F}(E_2)  +  \alpha(E_1,E_2) \hfill $\\
\\
$$ & $\leq$ & $0.767+ \frac{1}{2}\log \deg\varphi+ \frac{1}{2}\log\deg\varphi,\hfill $\\
\\
\end{tabular}

\noindent using the two inequalities of Lemma \ref{hauteur j}, then the isogeny estimate (\ref{Raynaud}) and the first inequality of Proposition \ref{alpha}. Finally, use the dual isogeny to obtain the same upper bound for the opposite difference and conclude.

For inequality (\ref{66}), the calculation starts in the same way but the estimate on $\alpha(E_1, E_2)$ is given using the second inequality of Proposition \ref{alpha}.
\\

\begin{tabular}{lll}
$\frac{1}{12}\left( h(j_1)-h(j_2)\right)$ & $\leq$ & $0.767+  h_{F}(E_1)-h_{F}(E_2)  +  \alpha(E_1,E_2) \hfill $\\
\\

$$ & $\leq$ & $0.767+ \frac{1}{2}\log \deg\varphi+ \frac{1}{2}\log(1+h(j_1))\hfill $\\
\\

$$ & $$ & $+0.97-\frac{1}{2}\log 2\pi -\frac{1}{2}\log\frac{\sqrt{3}}{2}\hfill$\\
\\

$$ & $\leq$ & $0.89+\frac{1}{2}\log \deg\varphi+ \frac{1}{2}\log(1+h(j_1)).$\\
\\
\end{tabular}

This concludes the proof of Theorem \ref{dim1}. Note that Lemma \ref{hauteur j} and Lemma \ref{logtau} give the following explicit version of Silverman's comparison:

\begin{equation}\label{exsil}
1.18\leq \frac{1}{12}h(j)- h_{F}(E)\leq 2.08 + \frac{1}{2}\log(1+h(j)).
\end{equation}
\\

To get Corollary \ref{j isogenes}, let us now recall Th\'eor\`eme 1.4 page 347 of \cite{14}.
\begin{theorem}(Gaudron-R\'emond)\label{Gaudron Remond}
Let $E$ and $E'$ be elliptic curves defined over a number field $K$ of degree $d$, isogenous over $\overline{K}$. Then there exists a $\overline{K}$-isogeny $\varphi:E\to E'$ such that 
\begin{equation}
\deg\varphi \leq 10^{7}d^2 (\max\{h_{F}(E), 985\}+4\log d )^2.
\end{equation}
Moreover, if $E$ (and hence $E'$) has complex multiplications, one has the better upper bound 
\begin{equation}
\deg\varphi \leq 34000d^2 (\max\{h_{F}(E)+ \frac{1}{2}\log d, 1\})^2.
\end{equation}
Finally, if $E$ and $E'$ don't have complex multiplications and if $K$ has a real embedding, then one has the better upper bound 
\begin{equation}
\deg\varphi \leq 3583 d^2 (\max\{h_{F}(E), \log d, 1\})^2.
\end{equation}
\end{theorem}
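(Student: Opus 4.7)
The plan is to follow the Arakelov-theoretic isogeny strategy of Bost and Gaudron--R\'emond. Since $E$ and $E'$ are given as isogenous, the $\mathbb{Z}$-module $\Hom(E,E')$ is nonzero, of rank $\rho=1$ in the non-CM case and $\rho=2$ in the CM case. The degree function extends, via the Rosati involution attached to the canonical principal polarizations, to a positive definite integral quadratic form $Q$ on $\Hom(E,E')\otimes\mathbb{R}$. The minimal $\overline{K}$-isogeny degree is then the squared length of a shortest nonzero vector of the Euclidean lattice $(\Hom(E,E'),Q)$, so the whole theorem reduces to a lattice-point search.

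First I would invoke Minkowski's first theorem, or preferably an Arakelov-enhanced version over $\mathcal{O}_K$, to produce a nonzero $\varphi\in\Hom(E,E')$ satisfying
\begin{equation*}
\deg\varphi\ \leq\ c_\rho\,(\det Q)^{1/\rho}
\end{equation*}
for an explicit numerical constant $c_\rho$. The task is then to bound the covolume $\det Q$. The analytic input goes as follows: at each complex embedding $\sigma$ of $K$, a homomorphism $\varphi:E\to E'$ corresponds to a scalar $\lambda_{\varphi,\sigma}\in\mathbb{C}$ mapping the period lattice $\Omega_\sigma$ of $E_\sigma$ into $\Omega'_\sigma$, and $\deg\varphi=|\lambda_{\varphi,\sigma}|^2\cdot\mathrm{covol}(\Omega'_\sigma)/\mathrm{covol}(\Omega_\sigma)$. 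Taking the product over all embeddings and combining this with an arithmetic lower bound on the injectivity diameter in terms of the Faltings height (in the spirit of Lemma \ref{logtau}) yields the required control of $\det Q$ by $h_F(E)$, $d$, and $\log d$.

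The main obstacle will be producing numerically sharp constants. This demands an Arakelov--Minkowski theorem formulated directly over $\mathcal{O}_K$ rather than after restriction of scalars to $\mathbb{Z}$, a careful slope argument \`a la Bost to control all archimedean contributions, and an effective elliptic period theorem of David. The CM refinement uses that $\mathrm{End}(E)\otimes\mathbb{Q}$ is an imaginary quadratic field, so that $\Hom(E,E')$ acquires the structure of a rank-one module over an explicit order; this rigidifies the lattice and yields the tighter constant $34000$. The real-embedding case exploits the action of complex conjugation on $\Hom(E,E')$: one restricts to the fixed sublattice, reducing the effective rank and producing the sharpest constant $3583$.
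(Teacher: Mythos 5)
The paper does not prove this statement at all: it is quoted verbatim as Th\'eor\`eme 1.4 of Gaudron--R\'emond \cite{14} and used as a black box, so you are attempting something the paper deliberately avoids, namely reproving the explicit isogeny theorem itself. Your roadmap does match the broad architecture of \cite{14} --- reduce the minimal isogeny degree to the first minimum of the lattice $(\Hom(E,E'),\deg)$, bound that minimum by the covolume via a Minkowski or slope inequality, and control the covolume through lower bounds on periods --- but as written it is a research program, not a proof. None of the three displayed inequalities is actually derived: the constants $10^{7}$, $985$, $34000$ and $3583$ all come out of the quantitative core of \cite{14}, the \emph{th\'eor\`eme des p\'eriodes} (an explicit Baker-type lower bound for the norm of a nonzero period of a polarized abelian variety in terms of its Faltings height and the degree of its field of definition), which you only name in passing. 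That input is far deeper than Lemma \ref{logtau}, which merely compares $\mathrm{Im}\,\tau_\sigma$ with $h(j)$ and carries no arithmetic information about $h_F$ or about $d$; invoking something ``in the spirit of'' that lemma substantially understates what is needed, and in particular cannot produce the $\log d$ terms. Without stating and applying the period theorem, and without an explicit archimedean/slope computation over $\mathcal{O}_K$, the argument yields no numerical bound at all.

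Two smaller points. Your formula for the degree at an embedding is inverted: if $\varphi$ is induced by $z\mapsto\lambda z$ with $\lambda\Omega_\sigma\subset\Omega'_\sigma$, then $\deg\varphi=[\Omega'_\sigma:\lambda\Omega_\sigma]=|\lambda|^{2}\,\mathrm{covol}(\Omega_\sigma)/\mathrm{covol}(\Omega'_\sigma)$. And in the non-CM case $\Hom(E,E')$ has rank one, so the Minkowski step is vacuous (the shortest vector \emph{is} the generator and its degree \emph{is} the determinant); the entire content then lies in bounding that determinant, which again rests on the period theorem. If, like the paper, you only need to use the statement, a citation to \cite{14} suffices; if you genuinely intend to prove it, you must import and make explicit the period theorem and the lattice estimates of Gaudron--R\'emond rather than gesture at them.
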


One may well replace $h_{F}(E)$ by $\min\{h_F(E), h_F(E')\}$ in the three inequalities of Theorem \ref{Gaudron Remond}, by considering the dual isogeny. Apply successively Theorem \ref{dim1}, Theorem \ref{Gaudron Remond} and (\ref{exsil}) to deduce Corollary \ref{j isogenes} with $10.68+6\log(10^7/144)\leq 77.6$ for the first case, $10.68+6\log(34000/144)\leq 43.5$ for the second case and $10.68+6\log(3583/144)\leq 30$ for the last case.

\section{Modular polynomials}\label{modu}

For a positive integer $m$, the modular polynomial $\Phi_m$ is the minimal polynomial of $j(mz)$ over the field $\mathbb{C}(j(z))$. It is a polynomial in two variables $\Phi_m(X,Y)\in{\mathbb{Z}[X,Y]}$, satisfying $\Phi_m(X,Y)=\Phi_m(Y,X)$ and $\Phi_m(j(mz),j(z))=0$. Its degree in each variable is $\psi(m)=m\prod_{p\vert m}(1+p^{-1})$.
Let $j_0\in{\overline{\mathbb{Q}}}$ be fixed, corresponding to the elliptic curve $E_0$. Then the roots of $\Phi_m(X,j_0)$ are exactly the $j$-invariants of elliptic curves with a cyclic isogeny of degree $m$ to $E_0$.

The coefficients of $\Phi_m$ grow rather rapidly with $m$. We recall the asymptotic result given in \cite{6}. Denote the height of a polynomial in $\mathbb{\mathbb{C}}[X,Y]$ by 
\begin{equation}
h_\infty\Big(\sum_{0\leq s,k\leq n}c_{s,k}X^sY^k\Big)=\log\max_{0\leq s,k\leq n}\vert c_{s,k}\vert.
\end{equation}

When $m$ goes to infinity \cite{6} provides us with 
\begin{equation}\label{Paula}
h_\infty(\Phi_m)=6\psi(m)\Big(\log m -\sum_{p\vert m}\frac{1}{p}\log p +O(1)\Big).
\end{equation}  

In the case where $m=\ell$ is a prime number, one finds in \cite{4} the explicit inequality 
\begin{equation}
h_\infty(\Phi_\ell)\leq 6\ell \log\ell +16\ell +14\sqrt{\ell}\log\ell.
\end{equation}

We will give an explicit upper bound valid for general $m$, but slightly worse than the previous bound for the prime case. We start by a lemma, essentially Lemma 20 page 312 of \cite{4}, with a statement that will better work for us.

\begin{lemma}\label{lem20}
Let $P\in{\mathbb{C}[X,Y]}$ be a nonzero polynomial of degree at most $n\geq1$ in each variable. Suppose $h_\infty(P(X,y_k))\leq B$ for each $y_k=L (1+\frac{k}{n})$, with $0\leq k \leq n$, for some real numbers $B>0$ and $L>1$. Then we have 
\begin{equation}
h_\infty(P)\leq B+\Big(\frac{1+ \log L}{L}+3\log2\Big)n.
\end{equation}
\end{lemma}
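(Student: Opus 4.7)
The natural tool is Lagrange interpolation: write $P(X,Y) = \sum_{s=0}^{n} P_s(Y)\, X^s$ with $P_s(Y) = \sum_{m=0}^{n} c_{s,m}\, Y^m$, so the hypothesis reads $|P_s(y_k)| \le e^B$ for every $s$ and every $k$, and the goal is to bound $|c_{s,m}|$ uniformly in $s$ and $m$. Since each $P_s$ has degree at most $n$, it is uniquely determined by its values at the $n+1$ nodes $y_0, \ldots, y_n$, via
\[
P_s(Y) = \sum_{k=0}^{n} P_s(y_k)\, L_k(Y), \qquad L_k(Y) = \prod_{j \ne k} \frac{Y - y_j}{y_k - y_j}.
\]
Since the nodes are in arithmetic progression with common difference $L/n$, the denominators are explicit: $|\prod_{j \ne k}(y_k - y_j)| = (L/n)^n\, k!\, (n-k)!$.

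\textbf{Key estimate.} On any circle $|Y| = R$, the bound $|Y - y_j| \le R + 2L$, combined with $\sum_{k} \binom{n}{k} = 2^n$, yields
\[
\sum_{k=0}^{n} |L_k(Y)| \;\le\; \frac{(R + 2L)^n\, n^n\, 2^n}{L^n\, n!} \;\le\; \Bigl( \tfrac{2e(R+2L)}{L} \Bigr)^n,
\]
the last step using Stirling's inequality $n^n/n! \le e^n$. Cauchy's integral formula $|c_{s,m}| \le R^{-m} \max_{|Y| = R} |P_s(Y)|$ then gives, for every radius $R > 0$,
\[
\log |c_{s,m}| \;\le\; B + n(1 + \log 2) + n \log\!\Bigl(1 + \tfrac{2L}{R}\Bigr) + (n - m)\log R.
\]

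\textbf{Optimization.} The last step is to choose $R$ to minimize the right-hand side. Differentiating in $R$ leads to the stationary point $R = 2mL/(n-m)$; substitution and maximization over $t = m/n \in [0,1]$ (a concave entropy-type optimization) yields a bound of the shape $B + n\, C(L)$ independent of $m$. Maximizing over $s \in \{0,\ldots,n\}$ then gives $h_\infty(P) \le B + n\, C(L)$.

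\textbf{Main obstacle.} The framework is routine linear algebra; the delicate point is producing the \emph{exact} constant $(1+\log L)/L + 3\log 2$. A naive choice such as $R = 2L$ already gives the structurally correct $B + n(3\log 2 + 1)$, which matches the stated bound at $L = 1^+$ but ignores the gain available when $L > 1$. Extracting the $(1+\log L)/L$ correction (which is less than $1$ for $L > 1$) requires either a radius $R$ tuned to both $L$ and $m$, or a sharper replacement of the crude inequality $|Y - y_j| \le R + 2L$ that exploits the fact that all nodes satisfy $y_j \ge L > 1$. Once this bookkeeping is carried out, the result follows.
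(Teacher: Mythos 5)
Your framework coincides with the paper's up to a point: the same decomposition $P=\sum_s P_s(Y)X^s$, Lagrange interpolation of each $P_s$ at the nodes $y_k$, and the exact denominator value $(L/n)^n\,k!\,(n-k)!$. The divergence, and the gap, lies in how the numerators $\prod_{j\neq k}(Y-y_j)$ are treated. You estimate them on a circle $|Y|=R$ and extract coefficients by Cauchy's formula; the paper instead bounds each coefficient $a_t$ of $\prod_{j\neq k}(Y-y_j)$ directly via the elementary symmetric functions of the nodes (Lemma 19 of Br\"oker--Sutherland), getting $\log|a_t|\le\log\frac{L^n(2n)!}{n^nn!}+\frac{1+\log L}{L}n$, and then sums over $k$ using $\sum_k\frac{1}{k!(n-k)!}=\frac{2^n}{n!}$, which yields precisely $e^{n(1+\log L)/L}\binom{2n}{n}2^n\le e^{n(1+\log L)/L}2^{3n}$. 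That is where the term $\frac{1+\log L}{L}$ comes from, and your argument never produces it: you concede this yourself, so the proof is incomplete as written.

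More importantly, the missing step is not mere bookkeeping: the Cauchy route as set up cannot reach the stated constant for large $L$. First, your displayed inequality drops the factor $L^{-n}$ coming from the denominators, so with $R=2L$ it actually gives $B+n(1+3\log 2)+n\log L$ at $m=0$, not $B+n(1+3\log 2)$; the corrected quantity to optimize is $B+n(1+\log 2)+n\log(R+2L)-n\log L-m\log R$. Minimizing over $R$ (at $R=2mL/(n-m)$) and maximizing over $m$ gives $B+n\bigl(1+\log 2+\log(2+\tfrac1L)\bigr)$, which tends to $B+n(1+2\log 2)\approx B+2.386\,n$ as $L\to\infty$, whereas the lemma asserts $B+n(\frac{1+\log L}{L}+3\log 2)\to B+3n\log 2\approx B+2.079\,n$. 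The Stirling loss $n^n/n!\le e^n$ and the crude bound $|Y-y_j|\le R+2L$ are intrinsic to the circle method here, so "the result follows once the bookkeeping is carried out" is not correct for general $L>1$; you need to bound the interpolation numerators coefficientwise, as the paper does. (In the paper's application one takes $L=j_0=2$, where your weaker bound would in fact suffice, but that is not what the lemma claims.)
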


\begin{proof}
We may write $P(X,Y)=\sum_{0\leq r\leq n}Q_r(Y)X^r$ for some polynomials $Q_r$. For any degree $0\leq r\leq n$ and any of the above points $y_k$, let $c_{k,r}$ be the coefficient of $X^r$ of the polynomial $P(X,y_k)$. By Lagrange interpolation, one has 
\begin{equation}
Q_r(Y)=\sum_{k=0}^{n}c_{k,r}\prod_{\substack{s=0\\s\neq k}}^{n}\frac{Y-y_s}{y_k-y_s}.
\end{equation}
We write $T(Y)=\displaystyle{\prod_{\substack{s=0\\s\neq k}}^{n}(Y-y_s)=\sum_{t=0}^{n} a_t Y^t}$, and the relation between roots and coefficients (apply Lemma 19 page 310 of \cite{4} to the polynomial $T$) imply for all $0\leq t\leq n$ with nonzero $a_t$ 
\begin{equation}
\log\vert a_t\vert\leq \log \frac{L^n (2n)!}{n^n n!} + \frac{1+\log L}{L}n.
\end{equation}

Moreover 
\begin{equation}
\prod_{\substack{s=0\\s\neq k}}^n\vert y_k - y_s\vert=L^n \frac{(n-k)!k!}{n^n}.
\end{equation}

We get that the absolute value of the nonzero coefficients of $Q_r(Y)$ is bounded from above by 
\begin{equation}
\sum_{0\leq k\leq n} \vert c_{k,r}\vert \frac{L^n (2n)!}{n^n n!} e^{\frac{n}{L}(1+\log L)}\Big(\frac{(n-k)!k!}{n^n}L^n\Big)^{-1} \leq \max_{0\leq k\leq n} \vert c_{k,r}\vert e^{\frac{n}{L}(1+\log L)}\binom{2n}{n}2^n.
\end{equation}
As $h_\infty(P(X,y_k))\leq B$ by hypothesis, and $\binom{2n}{n}\leq 2^{2n}$, we are done.
\end{proof}

We add a small technical lemma.

\begin{lemma}\label{tech}
Let $a,b$ be real numbers satisfying the initial inequality $a\leq b+6\log(1+a)$. If $a\geq 47$, then $a\leq b+6\log(1+2b)$. \footnote{If $N\geq 46$ and $a\geq6N\log(1+N^2)$, one has $a\leq (1+\frac{1}{N-1})b$ and $a\leq b+6\log(1+\frac{N}{N-1}b)$. This leads to a slightly better upper bound in Corollary \ref{modul}, for bigger $m$ though.}
\end{lemma}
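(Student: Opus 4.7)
The plan is to reduce the claimed inequality $a\le b+6\log(1+2b)$ to the simpler comparison $a\le 2b$, and then to prove this latter bound directly from the hypothesis by a one-variable calculus argument using $a\ge 47$.

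First, I would observe that if one can show $2b\ge a$, then the target inequality follows immediately from the hypothesis: indeed, monotonicity of $\log$ gives $6\log(1+a)\le 6\log(1+2b)$, hence
\begin{equation*}
a-b\le 6\log(1+a)\le 6\log(1+2b),
\end{equation*}
which is what we want. So the whole proof reduces to verifying $b\ge a/2$ under the hypotheses.

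To prove $b\ge a/2$, I would use the hypothesis in the form $b\ge a-6\log(1+a)$, so it suffices to check that
\begin{equation*}
a-6\log(1+a)\ge \tfrac{a}{2},\qquad\text{i.e.,}\qquad a\ge 12\log(1+a),
\end{equation*}
for every $a\ge 47$. Set $f(a)=a-12\log(1+a)$. The derivative $f'(a)=1-12/(1+a)$ is nonnegative for $a\ge 11$, so $f$ is increasing on $[47,\infty)$. Thus it is enough to check $f(47)\ge 0$, i.e.\ $47\ge 12\log 48$. A direct numerical evaluation gives $12\log 48\le 46.46$, which settles it.

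The only real obstacle is the boundary numerical check at $a=47$; everything else is just monotonicity of the logarithm and of $f$. Assembling the three steps (reduction to $a\le 2b$, reduction to $a\ge 12\log(1+a)$, and the numerical check) yields the lemma. The footnote variant, which asserts an analogous statement under $a\ge 6N\log(1+N^2)$ for $N\ge 46$, would be proved by exactly the same scheme: use $b\ge a-6\log(1+a)$ to deduce $b\ge (1-\tfrac{1}{N})a$, hence $\tfrac{N}{N-1}b\ge a$, and then compare $6\log(1+a)$ with $6\log\bigl(1+\tfrac{N}{N-1}b\bigr)$.
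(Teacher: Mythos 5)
Your proof is correct and follows essentially the same route as the paper: both arguments first deduce $a\le 2b$ from the bound $\log(1+a)\le a/12$ valid for $a\ge 47$, and then substitute $a\le 2b$ back into the initial inequality. Your calculus verification of $a\ge 12\log(1+a)$ on $[47,\infty)$ just makes explicit a step the paper states without proof.
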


\begin{proof}
For any $a\geq 47$, one has $\log(1+a)\leq \frac{a}{12}$, hence if $a\leq b+6\log(1+a)$ we obtain $a\leq b+6\frac{a}{12}$, hence $a\leq 2b$. Inject in the initial inequality to obtain $a\leq b+6\log(1+2b)$. \footnote{Remark that depending on the value of $b$, one may want to bootstrap the initial inequality to $a\leq b+6\log(1+b+6\log(1+2b))$, but the gain is small here.}
\end{proof}

We are now ready to give the following general upper bound.

\begin{corollary}\label{modul}
Let $m$ be a positive integer. Then 
\begin{equation}
h_\infty(\Phi_m)\leq\psi(m)\Big(6\log m+\log\psi(m)+6\log(12\log m+2\log\psi(m)+25.2)+15.7  \Big).
\end{equation}
\end{corollary}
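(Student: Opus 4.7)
The plan is to combine the Lagrange-interpolation bound of Lemma~\ref{lem20} with the isogeny height comparison of Theorem~\ref{dim1}. The key link is that for any rational specialization $Y = y_k$, the univariate polynomial $\Phi_m(X,y_k)$ factors over $\overline{\mathbb{Q}}$ as $\prod_{i=1}^{n} (X-j_{k,i})$, where $n = \psi(m)$ and the $j_{k,i}$ are precisely the $j$-invariants of elliptic curves related to the curve with $j$-invariant $y_k$ by a cyclic $m$-isogeny. This is exactly the setting in which Theorem~\ref{dim1} controls heights.

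Concretely, I would set $L = n = \psi(m)$ and take the interpolation points $y_k = L(1+k/n) = n+k \in \mathbb{Z}$ for $0 \leq k \leq n$. Using that $\Phi_m$ is monic of degree $n$ in $X$ and bounding the elementary symmetric polynomials in the roots by $\binom{n}{r} \leq 2^n$ times $\prod_i \max(1,|j_{k,i}|)$, one obtains
\[
h_\infty(\Phi_m(X,y_k)) \leq n\log 2 + \sum_{i=1}^n \log\max(1,|j_{k,i}|).
\]

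The next step is to transform the sum over complex roots into a sum of Weil heights. Since $\Phi_m(X,y_k) \in \mathbb{Q}[X]$, its root multiset is stable under $\mathrm{Gal}(\overline{\mathbb{Q}}/\mathbb{Q})$; grouping into Galois orbits $O_\ell$ with representative $j^{(\ell)}$ of degree $|O_\ell|$ over $\mathbb{Q}$, one has
\[
\sum_{j \in O_\ell} \log\max(1, |j|) = \sum_{v\mid\infty} d_v \log\max(1, |j^{(\ell)}|_v) \leq |O_\ell| \cdot h(j^{(\ell)}),
\]
so the total is at most $n \cdot \max_\ell h(j^{(\ell)})$. Applying inequality~(\ref{66}) to the cyclic $m$-isogeny between the curves with $j$-invariants $y_k$ and $j^{(\ell)}$ yields the implicit bound $h(j^{(\ell)}) \leq b + 6\log(1+h(j^{(\ell)}))$ with $b := h(y_k) + 10.68 + 6\log m$, which Lemma~\ref{tech} converts (for $h(j^{(\ell)}) \geq 47$) into $h(j^{(\ell)}) \leq b + 6\log(1+2b)$.

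Finally, with $h(y_k) \leq \log(2n) = \log 2 + \log\psi(m)$ and $L = n$, the quantity $1+2b$ is at most $12\log m + 2\log\psi(m) + 25.2$ after absorbing the small constants $2\log 2 + 21.36 + 1 \leq 25.2$, and the interpolation error $\bigl(\tfrac{1+\log n}{n} + 3\log 2\bigr) n$ from Lemma~\ref{lem20} contributes at most $(3\log 2 + 1)n$. Collecting the constant contributions $4\log 2 + \log 2 + 10.68 + 1 \leq 15.7$ and combining with the $n$-multiplied bulk yields the stated bound. The main obstacle is the careful bookkeeping of these numerical constants---ensuring that $25.2$ and $15.7$ are indeed achievable---and handling the small-$m$ edge cases where $h(j^{(\ell)}) < 47$, which may require either refining the implicit inequality near its fixed point or verifying the target bound directly for the first few values of $m$.
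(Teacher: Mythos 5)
Your proposal is correct and follows essentially the same route as the paper: Lagrange interpolation via Lemma~\ref{lem20}, the root--coefficient bound for the specialized polynomial, inequality~(\ref{66}) combined with Lemma~\ref{tech}, and comparable constant bookkeeping (the paper takes $L=2$ with points $2(1+k/n)$ rather than $L=\psi(m)$, which changes nothing substantial, and your Galois-orbit justification of the passage from complex root sizes to Weil heights is a welcome explicit version of a step the paper leaves implicit). The only piece you defer --- the case $h(j^{(\ell)})<47$ and very small $m$ --- is handled in the paper by the crude bound $h_\infty(\Phi_m(X,y_k))\leq 47.7\,\psi(m)$, which is absorbed into the main term for $m\geq 6$, together with direct verification for $\Phi_1,\dots,\Phi_5$.
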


\begin{proof}

Let $j_0>1$ be a rational number. As $\Phi_m(X,j_0)$ is monic of degree $\psi(m)$ and with rational coefficients, the relation between roots and coefficients imply 
\begin{equation}\label{coef}
h_\infty(\Phi_m(X,j_0))\leq \log\binom{\psi(m)}{\lfloor\psi(m)/2\rfloor}+\psi(m)\max_{j}{h(j)},
\end{equation}
where $j$ ranges over all roots of $\Phi_m(X,j_0)$. Let $j_1$ be a root of $\Phi_m(X,j_0)$ of maximal height. Then Theorem \ref{dim1} implies 
\begin{equation}\label{roots}
 h(j_1)-h(j_0) \leq 10.68+6\log m +6\log(1+h(j_1)).
\end{equation}

If $h(j_1)\leq 47$, we get directly from (\ref{coef}) \begin{equation}\label{cas1}
h_\infty(\Phi_m(X,j_0))\leq \log\binom{\psi(m)}{\lfloor\psi(m)/2\rfloor}+\psi(m)47\leq 47.7\psi(m).
\end{equation}
If $h(j_1)\geq 47$, we use Lemma \ref{tech} with $a=h(j_1)$ and $b=h(j_0)+10.68+6\log m$ to write

\begin{equation}\label{roots2}
 h(j_1)\leq h(j_0)+10.68+6\log m +6\log(12\log m+2h(j_0)+22.36).
\end{equation}

Combine (\ref{coef}) and (\ref{roots2}) to obtain, using  $\binom{\psi(m)}{\lfloor\psi(m)/2\rfloor}\leq 2^{\psi(m)}$
\begin{equation}
\frac{h_\infty(\Phi_m(X,j_0))}{\psi(m)}\leq 6\log m+6\log(12\log m+2h(j_0)+22.36)+ h(j_0) + 10.68+\log2.
\end{equation}
Let us denote 
\begin{equation}
B(m,j_0)=\psi(m)\Big(6\log m+6\log(12\log m+2h(j_0)+22.36)+ h(j_0) + 11.38\Big).
\end{equation}
For any $0\leq k\leq n$, one has\footnote{This is where we lose an extra $\log m$. Other attempts for a set of interpolation points with smaller height give a better control here, but lose the more in (\ref{coef}).} $h(j_0(1+\frac{k}{n}))\leq h(j_0)+h(1+\frac{k}{n})\leq h(j_0)+\log(2n)$. Hence for any $0\leq k\leq n$, one gets

$\begin{array}{ll}
B(m,j_0(1+\frac{k}{n}))\leq & \psi(m)\Big(6\log m+6\log(12\log m+2h(j_0)+2\log(2n)+22.36)\\
& + h(j_0)+\log(2n) + 11.38\Big).\\
\end{array}$
\\

Let $B_2(m,j_0,n)$ stand for this last upper bound on $B(m,j_0(1+\frac{k}{n}))$, for any $0\leq k\leq n$. So in all cases, one can apply Lemma \ref{lem20} with $L=j_0$, the degree $n=\psi(m)$ and $B=\max\{B_2(m,j_0,\psi(m)), 47.7\psi(m)\}$ to obtain
\\

$\begin{array}{ll}
h_\infty(\Phi_m)\leq &  \max\{B_2(m,j_0,\psi(m)), 47.7\psi(m)\}+\psi(m)(\frac{1+ \log j_0}{j_0}+3\log2),
\\
& 
\end{array}$

and by choosing $j_0=2$ 
\\

$\begin{array}{ll}
h_\infty(\Phi_m)\leq & \max\Big\{\psi(m)\Big(6\log m+\log(2\psi(m))
\\

& +6\log(12\log m+2\log(2\psi(m))+23.8)+ 15\Big), 50.7\psi(m)\Big\},\\
\end{array}$

hence 
\\
\begin{equation}
\frac{h_\infty(\Phi_m)}{ \psi(m)}\leq\max\Big\{6\log m+\log\psi(m)+6\log(12\log m+2\log\psi(m)+25.2)+15.7, 50.7\Big\}.
\end{equation}

Denote $M(m)=\psi(m)\Big(6\log m+\log\psi(m)+6\log(12\log m+2\log\psi(m)+25.2)+15.7  \Big)$.

For $m\geq 6$, one has $\max\{M(m), 50.7\psi(m)\}=M(m)$ by direct estimate. For the first values of $m$ one has\\

$\begin{array}{l}
h_{\infty}(\Phi_1)=\log 1=0 \leq M(1),
\\
h_{\infty}(\Phi_2)=\log 157464000000000\leq M(2),
\\
h_{\infty}(\Phi_3)= \log 1855425871872000000000 \leq M(3),
\\
h_{\infty}(\Phi_4)= \log 280949374722195372109640625000000000000 \leq M(4),
\\
h_{\infty}(\Phi_5)= \log 141359947154721358697753474691071362751004672000 \leq M(5),
\\
\end{array}$

so we conclude that for any $m$ positive integer, the inequality $h_{\infty}(\Phi_m)\leq M(m)$ holds.
\end{proof}

\section{V\'elu's formulas}\label{section velu}

Let $E$ be an elliptic curve over $\overline{\mathbb{Q}}$ and let $G$ be a finite subgroup of $E(\overline{\mathbb{Q}})$. There exists an isogeny $\phi$ between $E$ and $E/G$, and an explicit equation for $E/G$ is given by V\'elu's work \cite{19} in the following way.

Choose a Weierstrass model of $E$: 
\begin{equation}
y^2+a_1xy+a_3y=x^3+a_2x^2+a_4x+a_6,
\end{equation}
and let $b_2=a_1^2+4a_2$, $b_4=a_1a_3+2a_4$, $b_6=a_3^2+4a_6$.
Let $F_2$ denote the set of points of order $2$ in $G$. One can find a subset $R$ in $G$ such that $G=F_2\cup R\cup (-R) \cup \{0\}$ as a disjoint union. Denote $S=R\cup F_2$. For $Q=(x_Q,y_Q)\in{S}$, we denote
 \\
 
 $\begin{array}{l}
 g_Q^x=3x_Q^2+2a_2x_Q+a_4-a_1y_Q,\\
 \\
 g_Q^y=-2y_Q-a_1x_Q-a_3,\\
 \\
 u_Q=4x_Q^3+b_2x_Q^2+2b_4x_Q+b_6,\\
 \\
 \end{array}
 $
 
  $\begin{array}{l}
 \displaystyle{t_Q=\left\{\begin{array} {l}
 x_Q^2+2a_2x_Q+a_4-a_1y_Q \quad if\quad Q\in F_2,\\
 6x_Q^2+b_2x_Q+b_4 \quad if\quad Q\in R,
 \end{array}\right.}\\
 \\
 \displaystyle{U_Q=\frac{t_Q}{x-x_Q}+\frac{u_Q}{(x-x_Q)^2},}\\
 \\
 \displaystyle{V_Q=\frac{2y+a_1x+a_3}{(x-x_Q)^3}+t_Q\frac{a_1(x-x_Q)+y-y_Q}{(x-x_Q)^2}+\frac{a_1u_Q-g_Q^xg_Q^y}{(x-x_Q)^2},}\\
 \\
 \displaystyle{t=\sum_{Q\in S}t_Q, \quad \mathrm{and}\quad\quad w=\sum_{Q\in S}(u_Q+x_Qt_Q).}\\
 \\
 \end{array}
 $
 
 The map $\Phi:E\to E/G$ given by $(x,y)\to(X,Y)=(x+\sum_{Q\in S}U_Q, y-\sum_{Q\in{S}} V_Q)$ is an isogeny of degree $\#G$ and one equation of $E/G$ is given by 
 \begin{equation}
 Y^2+a_1XY+a_3Y=X^3+a_2X^2+(a_4-5t)X+a_6-b_2t-7w.
 \end{equation}
 
In this situation, inequality (\ref{6}) of Theorem \ref{dim1} says the following.

\begin{corollary}
Let $E$ be an elliptic curve over $\overline{\mathbb{Q}}$ and let $G$ be a finite subgroup of $E(\overline{\mathbb{Q}})$. Let $j_E$ and $j_{E/G}$ denote their respective $j$-invariants, then
\begin{equation}\label{velu}
\vert h(j_E)-h(j_{E/G})\vert\leq 9.204+12\log\#G,
\end{equation}
\end{corollary}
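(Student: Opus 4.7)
The plan is essentially immediate: the statement is a direct specialization of inequality (\ref{6}) from Theorem \ref{dim1} to the isogeny produced by V\'elu's construction. The only thing to verify is that the relevant isogeny has degree exactly $\#G$, after which one plugs in and is done.

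First I would identify the V\'elu map $\Phi : E \to E/G$ described in the preceding paragraphs as a $\overline{\mathbb{Q}}$-isogeny between two elliptic curves defined over $\overline{\mathbb{Q}}$. Since $G$ is a finite subgroup of $E(\overline{\mathbb{Q}})$ and $\Phi$ is separable with kernel exactly $G$, its degree equals $\#\ker\Phi = \#G$. This places us exactly in the hypotheses of Theorem \ref{dim1} with $E_1 = E$, $E_2 = E/G$, $\varphi = \Phi$, and $\deg\varphi = \#G$.

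Then I would apply inequality (\ref{6}) directly: it yields
\begin{equation*}
|h(j_E) - h(j_{E/G})| \leq 9.204 + 12\log \deg\Phi = 9.204 + 12\log \#G,
\end{equation*}
which is precisely the claimed bound.

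I expect no real obstacle here; the corollary is just a repackaging of Theorem \ref{dim1} in the specific geometric setting where the isogeny is given explicitly by V\'elu's formulas. The only small point worth mentioning in the write-up is the identification $\deg\Phi = \#G$, which is standard for quotient isogenies in characteristic zero.
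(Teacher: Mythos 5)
Your proposal is correct and matches the paper's own reasoning exactly: the Vélu map $\Phi:E\to E/G$ is an isogeny of degree $\#G$, and the corollary is just inequality (\ref{6}) of Theorem \ref{dim1} applied to it. Nothing further is needed.
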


In other words, when measuring the difference in size between equations for $E$ and for $E/G$, the number of elements of $G$ matters more than the size of the coordinates of the points of $G$. This is not obvious from V\'elu's construction.

\section*{Acknowledgments}

The author is supported by the DNRF Niels Bohr Professorship of Lars Hesselholt, ANR-14-CE25-0015 Gardio and ANR-17-CE40-0012 Flair. Merci \`a Pascal Autissier et Ga\"el R\'emond pour leurs remarques utiles. Many thanks to the referee for constructive feedback.

\end{document}